\definecolor{LightCyan}{rgb}{0.88,1,1}
\definecolor{LightCyan1}{rgb}{0.80,1,1}
\definecolor{Gray}{gray}{0.9}
\definecolor{Gray1}{gray}{0.95}
\newcommand{\Z}{\mathbb{Z}}
\newcommand{\C}{\mathbb{C}}
\newcommand{\Q}{\mathbb{Q}}
\newtheorem{theorem}{Theorem}
\newtheorem{lemma}[theorem]{Lemma}
\newtheorem{proposition}[theorem]{Proposition}
\theoremstyle{definition}
\newtheorem{remark}[theorem]{Remark}
\newtheorem{definition}[theorem]{Definition}
\newtheorem{convention}[theorem]{Convention}
\title[Group actions on curves]{Group Actions on cyclic covers of the projective line}
\date{\today}
\author[A. Kontogeorgis]{Aristides Kontogeorgis*}
\address{Department of Mathematics, National and Kapodistrian  University of Athens
Pane\-pist\-imioupolis, 15784 Athens, Greece}
\email{kontogar@math.uoa.gr}
\author[P. Paramantzoglou]{Panagiotis Paramantzoglou }
\address{Department of Mathematics, National and Kapodistrian University of Athens
Pane\-pist\-imioupolis, 15784 Athens, Greece}
\email{pan\_par@math.uoa.gr}
\subjclass[2010]{11G30; 14H37; 20F36}
\date \today
\newcommand{\aprod}{\mathop{\operator@font \hbox{\Large$\ast$}}}
\begin{document}

\begin{abstract}
We use  tools from combinatorial  group theory in order to   study actions of three types on groups acting on a curve, namely the automorphism group of a compact Riemann surface, the mapping class group acting on a surface (which now is allowed to have some points removed) and the absolute Galois group $\mathrm{Gal}(\bar{\Q}/\Q)$ in the case of cyclic covers of the projective line. 
\end{abstract}

\maketitle


%
%
%
\section{Introduction}
There is a variety of groups that can act on a Riemann surface/algebraic curve over $\C$; the automorphism group, the mapping class group (here we might allow punctures) and  if the curve is defined over $\bar{\Q}$, then  the absolute Galois 
group $\mathrm{Gal}(\bar{\Q}/\Q)$ is also acting on the curve. Understanding the above  groups is a difficult problem and these actions provide information on both the  curve and the group itself. For all the groups mentioned above the action can often be understood in terms of linear representations, by allowing the group to act on vector spaces and modules related to the curve itself, as the (co)homology groups and section of holomorphic differentials.

For a compact Riemann surface $X$ the automorphism group $\mathrm{Aut}(X)$ consists of all  invertible maps $X\rightarrow X$ in the category of Riemann surfaces.

A compact Riemann surface minus a finite number of punctures can be also seen as a connected, orientable topological surface and the mapping class group $\mathrm{Mod}(X)$ can be considered acting on $X$. The  mapping class group is  the quotient 
\[
\mathrm{Mod}(X)=\mathrm{Homeo}^+(X)/\mathrm{Homeo}^0(X),
\]
where 
$\mathrm{Homeo}^+(X)$ is  the group of orientation preserving homeomorphisms of $X$ and  $\mathrm{Homeo}^0(X)$
is the connected component of the identity in the compact-open topology.

These actions of the above mentioned three types of groups seem totally unrelated and come from different branches of Mathematics. Recent progress in the branch of ``Arithmetic topology'' provide us with a complete different picture. 
First the group $\mathrm{Aut}(X)$ can be seen as a subgroup of $\mathrm{Mod}(X)$ consisting of ``rigid'' automorphisms. 

Y. Ihara in  \cite{Ihara1985-it}, \cite{IharaCruz}, proposed a method to treat elements in $\mathrm{Gal}(\bar{\Q}/\Q)$ as elements  in the automorphism group of the profinite free group. This construction is similar to the realization of braids as automorphisms of the free group. This viewpoint of elements in $\mathrm{Gal}(\bar{\Q}/\Q)$ as  ``profinite braids'' allows us to give a series of Galois representations similar to classical 
braid representations.

In this article we will focus on curves which are  cyclic ramified covers of the projective line. 
These curves form some of the  few examples of Riemann surfaces where explicit computations can be made. 

A ramified cover of the projective curve reduces to a topological cover, when the branch points are removed. By covering map theory these covers correspond to certain subgroups of the fundamental group of the projective line with branch points removed, which is a free group.  

The computation of homology groups can be done by abelianization of the fundamental group, which in turn can be computed using the Schreier lemma. This method of computation  provides us with a unified way to treat all the actions on curves, by seeing an element in these aforementioned groups as an automorphism of the corresponding fundamental group.

The authors find very interesting that this approach
provides us with a totally new method in order to study actions in the dual case, that is actions on global sections of holomorphic differentials $H^0(X,\Omega_X)$. When $G$ is the automorphism group, the determination of the $G$-module structure $H^0(X,\Omega_X)$ is a classical problem first posed by Hecke \cite{MR3069500}, which was solved by Chevalley and Weil \cite{Chevalley1934-eb} using character theory, when the characteristic of the field is zero.  

For the $\mathrm{Mod}(X)$ case, in \cite{McMullenBraidHodge} C. McMullen considered unitary representations of the braid group acting on global sections of differentials of  cyclic covers of the projective line. His result can be recovered by our homological computations by dualizing. This approach was also  mentioned in this article \cite[p. 914 after th. 5.5.]{McMullenBraidHodge}.
  We believe that the details of this computation are worth studying and are by no means  trivial.

Finally  the homology approach allows us to study the pro-$\ell$ analogue according to Ihara's point of view, and several classical notions like the homology intersection pairing can be generalized to the Weil pairing for the Tate module. This fits well with the ``arithmetic topology'' viewpoint, where notions from knot theory have an arithmetic counterpart, \cite{Morishita2011-yw}, \cite{MorishitaATIT}.


Let us now describe the results and the structure of the article. 
Section \ref{AIreps} is devoted to the  construction of Artin's and Ihara's representations. 
In section \ref{sec:FundGroupCcover} we compute the generators of the fundamental group of the open curves involved in this article. All information is collected in table \ref{Tab:hom} of page \pageref{Tab:hom}. 

%

We will make computations in several group algebras for  multiplicative groups. In order to avoid confusion we will denote by $\mathbf{Z}=\{t^a: a\in \mathbb{Z}\}$ and by $\mathbf{Z}_\ell=\{t^a: a\in \mathbb{Z}_\ell\}$, where $t$ is a formal parameter. These groups are isomorphic to  the groups $\Z$ and $\Z_\ell$. The group $\Z/n\Z=\langle \sigma \rangle$ is considered to be generated by the order $n$ element $\sigma$. 

Select a set $\Sigma$ consisted of $s$ points of $\mathbb{P}^1$. 
Let $C_s$ be a topological cover of $X_s=\mathbb{P}^1\backslash \Sigma$ with Galois group $\mathrm{Gal}(C_s/X_s)=\mathbf{Z}$, see definition \ref{defCs}. Let also $Y_n$ be a topological cover of $X_s$, covered by $C_s$, so that $\mathrm{Gal}(Y_n/X_s)=\Z/n\Z$.
We will denote by $\bar{Y}_n$ the complete algebraic curve corresponding to $Y_n$.

In section \ref{sec:Uniform-ram} we  investigate the decomposition of the homology groups as Galois modules and prove the following
\begin{theorem}
The homology groups for the cyclic covers $C_s$ (resp. $Y_n$) can be seen as Galois modules for the group $\mathbf{Z}$ (resp. $\Z/n\Z$) as follows:
\begin{align}
H_1(C_s,\Z) =R_0/R_0' &=\Z[\mathbf{Z}]^{s-2} =\Z[t,t^{-1}]^{s-2} 
\label{homology-decomposition}
\\
H_1(Y_n,\Z) =R_n/R_n' &=\Z[\Z/n\Z]^{s-2} \bigoplus \Z.
\nonumber
\end{align}
\end{theorem}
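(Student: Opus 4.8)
The strategy is to realize the cyclic covers $C_s$ and $Y_n$ via covering space theory and compute their first homology using the Reidemeister--Schreier method, then keep track of the deck-transformation action. Recall that $X_s=\mathbb{P}^1\setminus\Sigma$ has fundamental group a free group $F$ on $s-1$ generators $x_1,\dots,x_{s-1}$ (one loop around each of the first $s-1$ punctures, with the loop around the last puncture being $(x_1\cdots x_{s-1})^{-1}$); this is exactly the presentation recorded in section \ref{sec:FundGroupCcover} and Table \ref{Tab:hom}. The cover $C_s\to X_s$ with $\mathrm{Gal}(C_s/X_s)=\mathbf{Z}$ corresponds to the kernel $R_0$ of the surjection $\phi\colon F\to\mathbf{Z}$, and $H_1(C_s,\Z)=R_0/R_0'=R_0^{\mathrm{ab}}$, with $\mathbf{Z}$ acting by conjugation. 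Likewise $Y_n$ corresponds to the kernel $R_n$ of $F\to\Z/n\Z$, with $H_1(Y_n,\Z)=R_n/R_n'$ carrying the conjugation action of $\Z/n\Z=\langle\sigma\rangle$.

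First I would treat $C_s$. Choose the Schreier transversal $\{t^a : a\in\mathbf{Z}\}$ for $R_0$ in $F$ (using that each $x_i\mapsto t$, say, or more generally picking the monodromy so the transversal is the obvious one). The Reidemeister--Schreier rewriting gives a free generating set for $R_0$ indexed by pairs (transversal element, generator of $F$) minus the redundant ones coming from the transversal being a tree; concretely one gets generators $g_{i,a}=t^a x_i (\overline{t^a x_i})^{-1}$, and the conjugation action of $\mathbf{Z}$ permutes the index $a$ by translation. Abelianizing, $R_0^{\mathrm{ab}}$ becomes a free $\Z[\mathbf{Z}]=\Z[t,t^{-1}]$-module on the images of the $g_{i,\bullet}$, and one must check that after accounting for (i) the single relation that the product of the loops around all punctures is trivial in $F$ — no wait, $F$ is free, so rather: the generator $x_s$ is \emph{defined} as $(x_1\cdots x_{s-1})^{-1}$, so there are $s-1$ free generators of $F$, and (ii) the fact that the transversal is infinite cyclic so exactly one generator per ``level'' is killed, the rank over $\Z[t,t^{-1}]$ drops from $s-1$ to $s-2$. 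This yields \eqref{homology-decomposition}. The computation for $Y_n$ is parallel but now the transversal is $\{t^a : a\in\Z/n\Z\}$, finite of size $n$; Reidemeister--Schreier produces $n(s-1)-(n-1)$ free generators of $R_n$, abelianizing and grouping by the $\Z/n\Z$-action gives $(s-2)$ copies of the regular representation $\Z[\Z/n\Z]$ plus one extra rank-one piece. That leftover $\Z$ is precisely the class coming from the ``loop'' that survives as a genuine cycle on the finite cover but was unbounded (non-torsion in a different sense) on $C_s$ — equivalently it reflects that $H_1(Y_n)$ has $\Z$-rank $n(s-2)+1$ while $n(s-1)-(n-1)=n(s-2)+1$ matches, and the invariant-vector count forces the extra trivial summand.

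The main obstacle will be \textbf{identifying the correct free $\Z[t,t^{-1}]$- (resp. $\Z[\Z/n\Z]$-) basis of $R_0^{\mathrm{ab}}$ and proving the rank is exactly $s-2$ rather than $s-1$}, i.e. pinning down which Schreier generator becomes redundant after abelianization and checking there are no further relations. Concretely, one Reidemeister--Schreier generator is trivial (the one associated to the edge of the transversal tree), and one must verify that abelianizing does not introduce $\Z[t,t^{-1}]$-torsion or collapse — this is where the module structure genuinely enters, and a clean way to see it is to use the Fox free differential calculus / the chain complex of the universal abelian (resp. $\Z/n\Z$-) cover: $H_1$ sits in an exact sequence $0\to H_1(C_s,\Z)\to \Z[t,t^{-1}]^{s-1}\xrightarrow{\partial}\Z[t,t^{-1}]\to\Z\to 0$ coming from the presentation $2$-complex of $X_s$, where $\partial$ is given by the Fox derivatives of the relator (or, since $F$ is free with a chosen generator eliminated, by an augmentation-type map). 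Splitting off the image and kernel of $\partial$ gives the free rank $s-2$ summand directly; the finite-cover case is the same complex tensored down along $\Z[t,t^{-1}]\to\Z[\Z/n\Z]$, and the extra $\Z$ appears as $\mathrm{Tor}$ or equivalently as $H_1$ of the added relation $t^n=1$. I would also double-check the boundary-puncture bookkeeping (whether $\Sigma$ contains $\infty$, and whether the cover is ramified over all of $\Sigma$) against definition \ref{defCs} and Table \ref{Tab:hom}, since the number of free generators of $F$ and hence the exponent $s-2$ depends on that convention.
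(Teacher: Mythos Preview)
Your proposal is correct and follows essentially the same route as the paper: both arguments run Reidemeister--Schreier with the transversal $T=\{x_1^i\}$ for $R_0\lhd F_{s-1}$ (resp.\ $\{x_1^i:0\le i\le n-1\}$ for $R_n$), observe that the Schreier generators $x_1^i x_j x_1^{-i-1}$ with $j=1$ are all trivial, and then read off the $\Z[\mathbf{Z}]$- (resp.\ $\Z[\Z/n\Z]$-) module structure of the abelianization from the conjugation action, with $\beta_j=x_jx_1^{-1}$ ($2\le j\le s-1$) as module generators.

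Where you diverge is in the anticipated ``main obstacle''. You worry about checking that abelianization introduces no torsion or collapse and propose Fox calculus or the chain complex of the cover as a safeguard. The paper bypasses this entirely: since $R_0$ (resp.\ $R_n$) is \emph{free} on the Schreier generators, its abelianization is free abelian on the same generators, and the module decomposition is a pure regrouping, with no exact-sequence chase needed. In particular the rank drop from $s-1$ to $s-2$ is not a consequence of a relation in $F_{s-1}$ but simply the vanishing of every Schreier generator with $j=1$. For the extra $\Z$ summand in $H_1(Y_n,\Z)$, the paper is again more concrete than your Tor explanation: it just rewrites the basis so that the last generator is $x_1^n$, visibly fixed under conjugation by $x_1$, giving the trivial $\Z[\Z/n\Z]$-module $\Z$ directly. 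Your Fox-calculus route would of course also work, but it is heavier machinery than the situation requires.
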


Cyclic covers with infinite  Galois group lead to the Burau representation which is discussed in \ref{sec:BurauDiscrete}. 
Similar to the discrete case, we have that $H_1(C_s,\Z_\ell)= \Z_\ell[\mathbf{Z}]^{s-2}$ but in order to have an action of the absolute Galois group, a larger space is required, namely the completed group algebra
$\Z_\ell[[\mathbf{Z}_\ell]]^{s-2}$.

In section \ref{sec:Burau-prof} we give a pro-$\ell$ version of the analogue of a Burau representation
\[
\rho_{\mathrm{Burau}}: \mathrm{Gal}(\bar{\Q}/\Q) \rightarrow \mathrm{GL}_{s-2}(\Z_\ell[[\mathbf{Z}_\ell]])
\]
and in theorem \ref{MatBurau} we give a matrix expression of this representation. 


In section \ref{sec:applications-cyc-cov} for the complete curve $\bar{Y}_n$ we prove the following 
\begin{theorem}
Let $\sigma$ be a generator of the cyclic group $\Z/n\Z$.
The complete curve $\bar{Y}_n$ has homology 
\[
H_1(\bar{Y}_n,\Z)=J_{\Z/n\Z}^{s-2}, 
\]
 where 
$J_{\Z/n\Z}=\Z[\Z/n\Z]/\langle \sum_{i=0}^{n-1} \sigma^i\rangle$ is the co-augmentation module of  $\Z[\Z/n\Z]$.
\end{theorem}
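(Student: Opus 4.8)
The plan is to realize $\bar Y_n$ as the surface obtained from the open curve $Y_n$ by filling in the points over the branch locus $\Sigma$, and to read off the effect on $H_1$ equivariantly for $G=\Z/n\Z$, feeding in the computation $H_1(Y_n,\Z)=\Z[\Z/n\Z]^{s-2}\oplus\Z$ of the previous theorem. Under the uniform ramification hypothesis each $P_j\in\Sigma$ is totally ramified, hence covered by a single point $Q_j\in\bar Y_n$ fixed by all of $G$ (its inertia group being $G$ itself), so $\bar Y_n\backslash Y_n=\{Q_1,\dots,Q_s\}$. Picking $G$-invariant coordinate disks $D_j$ around the $Q_j$, the long exact sequence of the pair $(\bar Y_n,Y_n)$, together with the excision isomorphism $H_\bullet(\bar Y_n,Y_n)\cong\bigoplus_j H_\bullet(D_j,D_j\backslash Q_j)$, the vanishing $H_1(\bar Y_n,Y_n)=0$, and $H_2(Y_n,\Z)=0$ (an open surface), produces a $\Z[G]$-linear short exact sequence
\[
0\longrightarrow H_2(\bar Y_n,\Z)\longrightarrow\bigoplus_{j=1}^{s}H_2(D_j,D_j\backslash Q_j)\longrightarrow H_1(Y_n,\Z)\longrightarrow H_1(\bar Y_n,\Z)\longrightarrow 0 .
\]
Here $H_2(\bar Y_n,\Z)\cong\Z$ and each $H_2(D_j,D_j\backslash Q_j)\cong\Z$; because $G$ is orientation preserving and fixes each $Q_j$, all of these carry the trivial $G$-action and the leftmost map is, up to signs, the diagonal. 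Consequently the image $L$ of the connecting homomorphism is a $\Z[G]$-submodule of $H_1(Y_n,\Z)$ isomorphic as a $G$-module to $\Z^{s}/\Z_{\mathrm{diag}}\cong\Z^{s-1}$ with trivial action, generated by the classes $[\delta_j]$ of small loops around the $Q_j$ (subject to the single relation $\sum_j[\delta_j]=0$), and $H_1(\bar Y_n,\Z)=H_1(Y_n,\Z)/L$.

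The key step is to show that $L$ is precisely the $G$-invariant submodule of $H_1(Y_n,\Z)$. Since $\Z[\Z/n\Z]^{G}=\Z\cdot N$ for $N=\sum_{i=0}^{n-1}\sigma^{i}$, the previous theorem gives $H_1(Y_n,\Z)^{G}=N\cdot\Z[\Z/n\Z]^{s-2}\oplus\Z$, free of rank $s-1$. As $\sigma$ acts trivially on $L$, the equivariant inclusion $L\hookrightarrow H_1(Y_n,\Z)$ lands in $H_1(Y_n,\Z)^{G}$; both groups are free of rank $s-1$, so $H_1(Y_n,\Z)^{G}/L$ is finite, and being a subgroup of $H_1(\bar Y_n,\Z)=H_1(Y_n,\Z)/L$, which is torsion free since $\bar Y_n$ is a closed surface, it vanishes. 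Hence $L=H_1(Y_n,\Z)^{G}=N\cdot\Z[\Z/n\Z]^{s-2}\oplus\Z$ and
\[
H_1(\bar Y_n,\Z)=\bigl(\Z[\Z/n\Z]^{s-2}\oplus\Z\bigr)\big/\bigl(N\cdot\Z[\Z/n\Z]^{s-2}\oplus\Z\bigr)=\bigl(\Z[\Z/n\Z]/\langle N\rangle\bigr)^{s-2}=J_{\Z/n\Z}^{s-2}.
\]
As a sanity check, Riemann--Hurwitz gives $2g(\bar Y_n)=(n-1)(s-2)=\operatorname{rank}_{\Z}J_{\Z/n\Z}^{s-2}$.

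I expect the genuinely delicate point to be that last identification: a dimension count alone only shows $L$ has finite index in $H_1(Y_n,\Z)^{G}$, equivalently only identifies $H_1(\bar Y_n,\Q)$ with $J_{\Z/n\Z}^{s-2}\otimes\Q$; it is the torsion freeness of $H_1$ of the compact surface $\bar Y_n$ that upgrades this to an isomorphism of $\Z[G]$-modules. The other inputs — total ramification, so that each $Q_j$ is a single $G$-fixed point; the orientation preserving local action of $G$ near $Q_j$, so that $[\delta_j]$ is $G$-fixed and $L$ carries the trivial action; and the relation $\sum_j[\delta_j]=0$, which pins the rank of $L$ at $s-1$ — are routine. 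If one prefers a hands-on variant, $L$ together with explicit generators can instead be produced by running Reidemeister--Schreier on the orbifold group $\langle x_1,\dots,x_s\mid x_1\cdots x_s,\ x_1^{\,n},\dots,x_s^{\,n}\rangle\twoheadrightarrow\Z/n\Z$, whose kernel is $\pi_1(\bar Y_n)$, and matching the images of the $x_j^{\,n}$ against the basis of the previous theorem.
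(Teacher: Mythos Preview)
Your proof is correct and takes a genuinely different route from the paper's.

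Both arguments must identify the submodule $L\subset H_1(Y_n,\Z)$ generated by the loops $\gamma_j$ around the punctures with the $G$-invariants $H_1(Y_n,\Z)^G$, but they do so differently. The paper works explicitly with the Schreier generators: using the identity $x_j^{\,n}x_1^{-n}=\beta_j\beta_j^{\sigma}\cdots\beta_j^{\sigma^{n-1}}$ (their Lemma~\ref{writeInv}, eq.~(\ref{n-rel})), they see directly that the classes of $\gamma_j=x_j^{\,n}$ in $H_1(Y_n,\Z)=\Z[\Z/n\Z]^{s-2}\oplus\Z$ are the norm elements $N\beta_j$ together with $x_1^{\,n}$, which is exactly the invariant submodule (their Lemma~\ref{invariantEll}); quotienting then gives $J_{\Z/n\Z}^{s-2}$. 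You instead run the long exact sequence of $(\bar Y_n,Y_n)$, observe that $L$ carries the trivial $G$-action because the local rotations at the fixed points $Q_j$ are orientation preserving, compare ranks to get $L\subset H_1(Y_n,\Z)^G$ of finite index, and then invoke torsion-freeness of $H_1$ of the closed surface to force equality.

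Your argument is cleaner and more conceptual, and would apply verbatim to any totally ramified abelian cover without redoing the combinatorics. The paper's explicit computation, on the other hand, buys something the rest of the section needs: concrete bases $\{\beta_j^{\sigma^i}\}$ for $H_1(\bar Y_n,\Z)$ on which the braid generators and the elements of $\mathrm{Gal}(\bar\Q/\Q)$ can be evaluated by hand. Your closing remark about the Reidemeister--Schreier ``hands-on variant'' on the orbifold group is in fact essentially the path the paper follows.
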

The later space when tensored with $\C$ gives a decomposition 
\[
H_1(\bar{Y}_n,\Z)\otimes_\Z \C= \bigoplus_{\nu=1}^{n-1} V_\nu,
\]
where each $V_\nu$ is the $s-2$-dimensional eigenspace corresponding to eigenvalue $e^{\frac{2\pi i \nu}{n}}$ of the action of a  generator $\sigma$ of the group $\Z/n\Z$, where $\sigma$ is seen as a linear operator acting on $H_1(\bar{Y}_n,\Z)\otimes_\Z \C$.
Each space $V_\nu$ gives rise to a representation of the braid group $B_s$, which is the reduction of the Burau representation at $t\mapsto e^{\frac{2\pi i \nu}{n}}$. 

If $n=\ell^k$ then a similar reduction process can be applied to the  pro-$\ell$ Burau representation. We consider the $\ell^k-1$ non-trivial $\ell^k$-roots  of unity, $\zeta_1,\ldots,\zeta_{\ell^k-1}$  in the algebraically closed field $\bar{\Q}_\ell$.
We have
\[
\Z_{\ell}[[\mathbf{Z}_\ell]]^{s-2}
\otimes_{\Z_\ell} \bar{\Q}_\ell = \bigoplus_{\nu=1}^{\ell^k-1} V_\nu,
\]
which after reducing $\Z_{\ell}[[\mathbf{Z}_\ell]] \rightarrow \Z_{\ell} [\Z_{\ell}/\ell^k \Z_\ell]=\Z_{\ell} [\Z/\ell^k\Z]$ sending $t\mapsto \zeta_\nu$ gives rise to the representation in $V_\nu$. The modules $V_\nu$ in the above decomposition are only $\Z_\ell[[\mathbf{Z}_\ell]]$-modules and $\mathrm{ker}N$-modules, where $N:\mathrm{Gal}(\bar{\Q}/\Q)\rightarrow \Z_\ell^*$ is the pro-$\ell$ cyclotomic character. 






We would like to point out that the space $\Z_\ell[[\mathbf{Z}_\ell]]^{s-2}$ contains information of all covers $\bar{Y}_{\ell^k}$ for all $k\in \mathbb{N}$, and equals   the \'etale homology of a curve $\tilde{Y}$, which appears as a $\Z_\ell$-cover of the projective line, minus the same set of points removed. Going back from the arithmetic to topology we can  say that the classical discrete Burau representation can be recovered by all representations of finite cyclic covers $\bar{Y}_n$, since we can define the inverse limit of all mod $n$ representations obtaining the $B_s$-module $\Z[[\hat{\Z}]]^{s-2}$. This $B_s$-module in turn contains $\Z[\mathbf{Z}]^{s-2}$ as a dense subset.

Finally in section \ref{arithInter} we see how the analogue of the homology intersection pairing can be interpreted as an intersection pairing using the Galois action on the Weil pairing for the Tate module. 
For a free $\Z$ (resp. $\Z_\ell$)-module of rank $2g$, endowed with a symplectic pairing $\langle\cdot,\cdot\rangle$ the symplectic group is defined as
\[
\mathrm{Sp}(2g,\Z)=\{M\in \mathrm{GL}(2g,\Z):
\langle M v_1,M v_2\rangle=\langle v_1,v_2 \rangle
\}
\]
and the generalized symplectic group is defined as
\[
\mathrm{GSp}(2g,\Z)=\{M\in \mathrm{GL}(2g,\Z_\ell):
\langle M v_1,M v_2\rangle=m\langle v_1,v_2 \rangle, \text{ for some } m\in \Z_\ell^*
\}.
\]
In the topological setting the pairing is the intersection pairing and we have the following representation
\[
\rho: B_{s-1} \rightarrow \mathrm{Sp}(2g,\mathbb{Z})
\]
We employ properties of the Weil pairing in order to show that we have a representation  
\[
\rho': \mathrm{Gal}(\bar{\Q}/\Q) \rightarrow 
\mathrm{GSp}(2g,\Z_\ell)
\]
as an arithmetic analogue of the braid representation $\rho$.


{\bf Acknowledgement: }
The authors would like to thank Professor Nondas Kecha\-gias and the anonymous referee for their valuable comments and corrections.

\section{On Artin and Ihara representations}
\label{AIreps}
\subsection{Artin representation}

 It is known that the braid group  can be seen as an automorphism group of the free group  $F_{s-1}$ in terms of the Artin representation. More precisely the group $B_{s-1}$ can be defined as the subgroup of $\mathrm{Aut}(F_{s-1})$ generated by the elements $\sigma_i$ for $1\leq i \leq s-2$, given by
\[
\sigma_i(x_k)=
\begin{cases}
x_k & \mbox{ if } k\neq i,i+1,\\
x_i x_{i+1} x_i^{-1} & \mbox{ if } k =i, \\
x_i & \mbox{ if } k=i+1.
\end{cases}
\]
The open disk with $s-1$ points removed is homeomorphic with the the projective line with infinity and $s-1$ points removed. In particular, these spaces have isomorphic fundamental groups. 
Indeed,  
the free group $F_{s-1}$ is the fundamental group of $X_s$ defined as 
\begin{equation} \label{Xs-def}
X_s=\mathbb{P}^1-\{P_1,\ldots,P_{s-1},\infty\}.
\end{equation}
 In this setting the group $F_{s-1}$ is given as:
\begin{equation} \label{free-quodis}
F_{s-1}=\langle x_1,\ldots,x_{s}  | x_1x_2\cdots x_{s}=1\rangle,
\end{equation}
the elements $x_i$ correspond to homotopy classes of loop circling once clockwise around each removed point $P_i$.

\begin{remark} \label{larger-action}
Notice that not only $B_{s-1}$ acts on $F_{s-1}$ but also $B_{s}$ acts on $F_{s-1}$. Indeed, for the extra generator $\sigma_{s-1} \in B_s-B_{s-1}$ 
we define
\begin{equation}
\label{ss1}
\sigma_{s-1}(x_i)=x_i \qquad \text{ for } 1\leq i \leq s-2
\end{equation}
\begin{equation}
\label{ss2}
\sigma_{s-1}(x_{s-1})= x_{s-1} x_s x_{s-1}^{-1}
=
x_{s-2}^{-1}x_{s-3}^{-1} \cdots x_{1}^{-1} x_{s-1}^{-1}
\end{equation}
and using eq. (\ref{ss1}), (\ref{ss2}) we compute
\[
\sigma_{s-1}(x_s)=\sigma_{s-1}
(x_{s-1}^{-1}\cdots x_1^{-1})= 
 \sigma_{s-1}(x_{s-1})^{-1} 
 \big(
  x_{s-2}^{-1}\cdots x_1^{-1}
  \big)=
x_{s-1}.
\]
\end{remark}
\subsection{Ihara representation}
We will  follow the notation of  \cite{MorishitaATIT}. 
Y. Ihara, by considering the \'etale (pro-$\ell$) fundamental group of the space $\mathbb{P}^1_{\bar{\Q}}-\{P_1,\ldots,P_{s-1},\infty\}$, with $P_i\in \Q$,   introduced the monodromy representation
\[
\mathrm{Ih}_S: \mathrm{Gal}(\bar{\Q}/\Q) \rightarrow \mathrm{Aut}(\mathfrak{F}_{s-1}), 
\]
where $\mathfrak{F}_{s-1}$ is the pro-$\ell$ completion of the free group $F_{s-1}$. 
Here the  group $\mathfrak{F}_{s-1}$  admits a presentation, similar to eq. (\ref{free-quodis}),
\begin{equation}
\label{Frpres}
\mathfrak{F}_{s-1}=
\left\langle
x_1,\ldots,x_s | x_1x_2\cdots x_s=1
\right\rangle,
\end{equation}
where here $\mathfrak{F}_{s-1}$ is considered as a quotient of the free pro-$\ell$ group $\mathfrak{F}_s$ in the pro-$\ell$ category.

The image of the Ihara representation is inside the group \[
\tilde{P}(\mathfrak{F}_{s-1}):=
\left\{
\sigma\in \mathrm{Aut}(\mathfrak{F}_{s-1})| \sigma(x_i)\sim x_i^{N(\sigma)}
(1\leq i \leq s) \text{ for some } N(\sigma) \in \Z_\ell^*
\right\},
\]
where $\sim$ denotes the conjugation equivalence. 
This group is the arithmetic analogue of the Artin representation of ordinary (pure) braid groups inside $\mathrm{Aut}(F_{s-1})$.
Notice that the exponent 
$N(\sigma)$
depends only on  $\sigma$ and not on $x_i$. Moreover the map
\[
	N: \tilde{P}(\mathfrak{F}_{s-1})
	\rightarrow \mathbb{Z}_\ell^*
\]
is a group homomorphism and 
$N\circ\mathrm{Ih}_S:
\mathrm{Gal}(\bar{\Q}/\Q)\rightarrow \mathbb{Z}_\ell^*$ coincides with the cyclotomic character $\chi_\ell$.
\begin{remark}
As in remark \ref{larger-action}
the relation $x_1\cdots x_{s-1} x_s=1$ implies that $\tilde{P}(\mathfrak{F}_{s-1})$ also acts on the free group $\mathfrak{F}_s$ since $x_s=(x_1 \cdots x_{s-1})^{-1}$.
\end{remark}
In this setting an element $\sigma\in \mathrm{Gal}(\bar{\Q}/\Q)$ can be seen acting on the topological generators $x_1,\ldots,x_{s-1}$ of the free group by 
\begin{equation}
\label{actGeneratos}
\sigma(x_i)=w_i(\sigma) x_i^{N(\sigma)} w_i(\sigma)^{-1}.
\end{equation}
Moreover, by normalizing by an inner automorphism we might assume that $w_1(\sigma)=1$. We will use this normalization from now on.  

\begin{remark}
We have considered in Ihara's representation the points $P_1,\ldots,P_{s-1}$ to be in $\mathbb{Q}$. 
If we allow $P_1,\ldots,P_{s-1}$ to be in $\bar{\Q}$ then there is a minimal algebraic number field $K$which contains them all. We can consider in exactly the same way the absolute Galois group $\mathrm{Gal}(\bar{\Q}/K)=\mathrm{Gal}(\bar{K}/K)$ and then all arguments of this article work in exactly the same way for $\mathrm{Gal}(\bar{K}/K)$.

If now we want to consider representations of $\mathrm{Gal}(\bar{\Q}/\Q)$ but the field $K$ defined by the set of points $\bar{P}:=\{P_1,\ldots,P_{s-1}\}$ is strictly bigger than $\Q$, then  in order to obtain a reasonable action of $\mathrm{Gal}(\bar{\Q}/\Q)$ on the set of branch points, we have to assume that the polynomial 
$
f_{\bar{P}}(x):=\prod_{j=1}^{s-1} (x-P_j)$ is in $\Q[x].  
$
In this case the absolute Galois group induces a permutation action on the points $P_j$ and defines a subgroup of the the symmetric group $S_{s-1}$.


 The braid group is equipped by an onto map $\phi:B_{s-1} \rightarrow S_{s-1}$ with kernel the group of pure braids.  

 We have have argued that the braid group $B_s$ is a discrete analogue of the absolute Galois group $\mathrm{Gal}(\bar{\Q}/\Q)$. Every selection of points $\bar{P}$, which gives rise to a polynomial $f_{\bar{P}}(x)\in \Q[x]$, provides us with a map
  $\phi_{\bar{P}}:\mathrm{Gal}(\bar{\Q}/\Q)\rightarrow S_{s-1}$. We would like to see these maps $\phi_{\bar{P}}$ as analogues of the map $\phi$. The group of ``pure braids'' 
  with respect to such a map $\phi_{\bar{P}}$ is the 
  absolute Galois group $\mathrm{Gal}(\bar{\Q}/K)$ of the field $K$
  generated by the set of points $\bar{P}$, while the 
  image 
  $\phi_{\bar{P}}(\mathrm{Gal}(\bar{\Q}/\Q)) \subset S_{s-1}$ 
  is not onto, unless the points $P_1,\ldots,P_{s-1}$ 
  have no polynomial algebraic relations defined over $\Q$. As a matter of fact it conjectured -this is the inverse Galois problem- that any finite group can appear as the image of such a map $\phi_{\bar{P}}$ allowing $\bar{P}$ and $s$ to vary.  
In this way we obtain a short exact sequence 
\[
1 \rightarrow 
\mathrm{Gal}(\bar{\Q}/K) 
\rightarrow \mathrm{Gal}(\bar{\Q}/\Q) 
\stackrel{\phi_{\bar{P}}}{\longrightarrow} 
\mathrm{Gal}(K/\Q) \rightarrow 1.
\]
In general case, even if $\bar{P}$ is not a subset of $\Q$, there is a representation 
\[
\rho: \mathrm{Gal}(\bar{\Q}/\Q) \rightarrow 
\mathrm{Aut}(\mathfrak{F}_{s-1})
\] 
where for $\sigma \in \mathrm{Gal}(\bar{\Q}/\Q)$ $\rho(\sigma)(x_i)=w(\sigma) x_{\phi_{\bar{P}}(\sigma)} w(\sigma)^{-1}$. If $\sigma$ is a ``pure braid'',  then
the above action can be simplified, since the generator $x_i$ of $\mathfrak{F}_{s-1}$ is not moved to another generator.  
For this article the interesting part is the study of $\mathrm{Gal}(\bar{\Q}/\Q)$ and not the problem of finding the Galois group of a polynomial in $\Q[x]$. 
If we start by selecting all points in $\bar{P}$ in $\Q$, as Ihara did,  then the whole group $\mathrm{Gal}(\bar{\Q}/\Q)$ can be considered as an analogue of pure braids. 

\end{remark}

\subsection{Similarities}

For understanding representations of the absolute Galois group $\mathrm{Gal}(\bar{\Q}/\Q)$, the theory of coverings of $\mathbb{P}^1_{\Q}-\{0,1,\infty\}$ is enough, by Belyi's theorem, \cite{Belyi1}.
On the other hand the study of topological covers
of $\mathbb{P}^1_{\Q}-\{0,1,\infty\}$ is not very interesting; both groups 
$B_2$ and $B_3$ which can act on covers of $\mathbb{P}^1_{\Q}-\{0,1,\infty\}$ are  not very interesting braid groups. 
In order to seek out similarities between the Artin and Ihara representation, we will study   covers with more than three points removed.  
Notice that when the number $s$ of points we remove is $s>3$, then we expect that their configuration might also affect our study.

Moreover elements in the braid group are acting like elements in the mapping class group of the punctured disk i.e. on the  projective line minus $s$ points. 
The braid group acts like the symmetric group on the set of removed points $\Sigma$ and acts like a complicated homeomorphism on the complement $D_{s-1}$ of the $s-1$ points.

Let $\Sigma=\bar{P} \cup \{\infty\}$ and let $K$ be the field generated by the points in $\bar{P}$. The group $\mathrm{Gal}(\bar{\Q}/K)$ keeps invariant the 
set $\Sigma$ and corresponds to the notion of pure braids. Since $\mathrm{Gal}(\bar{\Q}/K)$ also acts on $\mathbb{P}^1_{\bar{\Q}}$ it acts on the difference 
$\mathbb{P}^1_{\bar{\Q}}\backslash \Sigma$. This mysterious action should be seen as the arithmetic analogue of the action of the (pure)braid group on the punctured disc.

Knot theorists study braid group representations, in order to provide invariants of knots (after Markov equivalence, see \cite[III.6 p.54]{Prasolov97}) and number theorists study Galois representations in order to understand the absolute Galois group $\mathrm{Gal}(\bar{\mathbb{Q}}/\mathbb{Q})$. Both kind of representations are important and bring knot and number theory together within the theory of  arithmetic topology. 

\section{On the fundamental group of cyclic covers}
\label{sec:FundGroupCcover}

Let $\pi:Y\rightarrow \mathbb{P}^1$ be a ramified 
Galois cover of the projective line ramified above the set $\Sigma=\{P_1,\ldots,P_s\} \subset \mathbb{P}^1$. 
The open curve $Y_0=Y-\pi^{-1}(\Sigma)$ is then a topological cover of $X_s=\mathbb{P}^1-\Sigma$ and can be seen as a quotient of the universal covering space $\tilde{X}_s$ by the free subgroup $R_0=\pi_1(Y_0,y_0)$ of the free group $\pi_1(X_s,x_0)=F_{s-1}$ (resp. pro-$\ell$ free group $\mathfrak{F}_{s-1}$), where $s=\#\Sigma$. 
We will employ the Reidemeister Schreier method, 
algorithm \cite[chap. 2 sec. 8]{bogoGrp},\cite[sec. 2.3 th. 2.7]{MagKarSol} in order to compute the group $R_0$.

\subsection{Schreier' s Lemma}
Let $F_{s-1}=\langle x_1, \cdots, x_{s-1}  \rangle$ be the free group with basis $X=\{ x_1, \cdots, x_{s-1}\}$ and let $H$ be a
subgroup of of $F_{s-1}$.

 A (right) {\bf Schreier Transversal} for $H$ in $F_{s-1}$ is a set $T=\{t_1=1, \cdots, t_n \}$ of reduced words, such that each right coset of $H$ in $F_{s-1}$ contains a unique word of $T$ (called a representative of this class) and all 
 initial segments of these words also lie in $T$.
 In particular, $1$ lies in $T$ (and represents the class $H$) and $H t_i \neq H t_j$, $\forall i \neq j$. For any $g \in F_{s-1}$ denote by $\overline{g}$ the element of $T$ with the property $Hg=H\overline{g}$.

 If $t_i \in T$ has the decomposition as a reduced word
$t_i=x_{i_1}^{e_1} \cdots x_{i_k}^{e_k}$ (with $i_j=1, \ldots, s-1$, $e_j= \pm 1$ and $e_j=e_{j+1}$ if $x_{i_j}=x_{i_{j+1}})$, then for every word $t_i$ in $T$ we have that
\begin{equation} \label{Xs-def1}
t_i=x_{i_1}^{e_1} \cdots x_{i_k}^{e_k} \in T \Rightarrow 1, x_{i_1}^{e_1}, x_{i_1}^{e_1}x_{i_2}^{e_2},\ldots, x_{i_1}^{e_1}x_{i_2}^{e_2} \cdots x_{i_k}^{e_k} \in T.
\end{equation}

\begin{lemma}[Schreier's lemma]
\label{lemma:schreier}
Let $T$ be a right Schreier Transversal for $H$ in $F_{s-1}$ and set $\gamma(t,x):= tx \overline{tx}^{-1}$, $t \in T$, $x \in X$ and $tx \notin T$. Then $H$ is freely generated by the set 
\begin{equation} \label{free-quods}
\{ \gamma(t,x) | \gamma(t,x) \neq 1 \rangle
\}.
\end{equation}
\end{lemma}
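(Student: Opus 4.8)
The plan is to prove two things separately: that the set $S=\{\gamma(t,x)\mid t\in T,\ x\in X,\ \gamma(t,x)\neq 1\}$ generates $H$, and that it generates $H$ \emph{freely}.

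\textbf{Generation.} First I would introduce a rewriting process. For $t\in T$ and $y\in X\cup X^{-1}$ set $\gamma(t,y):=ty\,\overline{ty}^{-1}\in H$, and observe that $\gamma(\overline{tx^{-1}},x)^{-1}=\gamma(t,x^{-1})$, so allowing the second argument to range over $X\cup X^{-1}$ introduces nothing beyond $S\cup S^{-1}$. Given $h\in H$ written as a reduced word $h=y_1y_2\cdots y_m$ with $y_i\in X\cup X^{-1}$, define $t_0=1$ and $t_i=\overline{t_{i-1}y_i}$ inductively. A telescoping computation gives $h=\gamma(t_0,y_1)\gamma(t_1,y_2)\cdots\gamma(t_{m-1},y_m)$; since $h\in H$ we have $t_m=\overline{h}=1$, so the right-hand side is a genuine word in the $\gamma(t,y)^{\pm1}$, deleting the trivial factors leaves a word in $S^{\pm1}$. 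Hence $S$ generates $H$.

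\textbf{Freeness.} This is the heart of the matter and the step I expect to be the main obstacle. The key structural fact is that, because $T$ is a Schreier transversal, each nontrivial $\gamma(t,x)=tx\,\overline{tx}^{-1}$ has, when written as a reduced word in $X^{\pm1}$, a \emph{distinguished} surviving occurrence of $x$, namely the one sitting between the reduced form of $t$ and the reduced form of $\overline{tx}^{-1}$. I would prove this by contradiction: if that middle $x$ cancelled, it would cancel either against the last letter of (the reduced word for) $t$ or against the first letter of (the reduced word for) $\overline{tx}^{-1}$; in the first case $tx$ equals, as a reduced word, a proper initial segment of $t$, which lies in $T$ by the Schreier condition \eqref{Xs-def1}, forcing $tx\in T$, contradicting $\gamma(t,x)\neq 1$; in the second case one similarly shows $\overline{tx}$ has a proper initial segment $v'\in T$ with $Hv'=Ht$, so $v'=t$ and again $tx\in T$, a contradiction. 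With this in hand, given a reduced product $w=\gamma(t_1,x_1)^{\epsilon_1}\cdots\gamma(t_k,x_k)^{\epsilon_k}$ (reduced meaning no adjacent pair is mutually inverse), I would track the distinguished letters: matching the tail of one Schreier generator (an initial segment of some $\overline{t x}^{\pm1}$) against the head of the next (an initial segment of some $t^{\pm1}$), the condition that consecutive factors are not inverse exactly prevents the junction cancellation from reaching either distinguished letter, so $w$ reduces to a nonempty word and $w\neq 1$. Carrying out this overlap bookkeeping precisely — equivalently, checking that $S$ is a Nielsen-reduced set — is the technical core.

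As a consistency check one can instead argue topologically: $H$ is the fundamental group of the covering graph of $X_s$ associated to $H$, the transversal $T$ indexes the vertices of a spanning tree, the edges outside the tree are in bijection with the nontrivial $\gamma(t,x)$, and the fundamental group of a graph is free on the edges outside any spanning tree. I would nonetheless keep the combinatorial argument, since it is what the Reidemeister–Schreier algorithm used later actually requires.
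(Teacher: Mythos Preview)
Your proof strategy is correct and is essentially the standard textbook argument for Schreier's lemma (the rewriting/telescoping step for generation, followed by the distinguished-letter/Nielsen-reduced analysis for freeness, with the covering-graph picture as a sanity check). However, you should be aware that the paper does \emph{not} supply its own proof of this lemma: it is stated as a classical result, with the proof deferred to the cited references \cite[chap.~2, sec.~8]{bogoGrp} and \cite[sec.~2.3, th.~2.7]{MagKarSol}. So there is no ``paper's proof'' to compare against; what you have written is precisely the kind of argument those references contain, and it is sound.
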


\subsection{Automorphisms of Free groups acting on subgroups}
If $R_0=\pi_1(Y_0,y_0)$ is a characteristic subgroup of $F_{s-1}=\pi_1(\mathbb{P}^1-\Sigma)$ (resp. of $\mathfrak{F}_{s-1}$ in the pro-$\ell$ case) then it is immediate that the Artin (resp. Ihara) representation gives rise  to an action on $R_0$.

Observe that since the cover $\pi:Y \rightarrow \mathbb{P}^1$ is Galois we have that $R_0 \lhd F_{s-1}$ and the Artin representation gives rise to a well defined action of the braid group on $R_0$.


The same argument applies for the kernel of the norm map in the Ihara case, that is since the pro-$\ell$ completion of $R_0$ is a normal subgroup of $\mathfrak{F}_{s-1}$, 
every element $\sigma$ in $\mathrm{Gal}(\bar{\Q}/\Q)$ with $\chi_\ell(\sigma)=1$ acts on the pro-$\ell$ completion of $\pi_1(Y_0,y_0)$.  

This is  in accordance with a result of J. Birman and H. Hilden \cite[th. 5]{Birman1972-pg}, which in the case of cyclic coverings $\pi:C \rightarrow (\mathbb{P}^1-\Sigma)$, relates  
the subgroup $\mathrm{Mod}_{\pi}(C)$ of the mapping class group of $C$ consisted of the  fiber preserving automorphisms,  the Galois group $\mathrm{Gal}(C/\mathbb{P}^1)$ and the mapping class group $\mathrm{Mod}(\mathbb{P}^1-\Sigma)$ of $\mathbb{P}^1-\Sigma$ in terms of the quotient
\[
\mathrm{Mod}_\pi(C)/\mathrm{Gal}(C/\mathbb{P}^1)=\mathrm{Mod}(\mathbb{P}^1-\Sigma).
\]

For example when  $Y$ is the covering corresponding to the commutator group $F_{s-1}'$, then $\mathrm{Gal}(Y/X_s)\cong F_{s-1}/F_{s-1}'=H_1(X,\mathbb{Z})$. Therefore, the latter space is acted on by the group of automorphisms, and the braid group $B_s$. 

\subsection{Automorphisms of curves}
For the case of automorphisms of curves, where  the Galois cover $\pi:Y \rightarrow \mathbb{P}^1$ has  Galois group $H$, we consider
 the short exact sequence
\[
1 \rightarrow R_0 \rightarrow F_{s-1} \rightarrow H \rightarrow 1.
\]
We see that there is an action of $H$ on $R_0$ modulo inner automorphisms
of $R_0$ and in particular  a well defined action of $H$ on $R_0/R_0'=H_1(Y_0,\mathbb{Z})$.
Therefore the space $H_1(Y_0,\mathbb{Z})$ can be seen as a direct sum of indecomposable $\Z[H]$-modules.
\begin{remark}
A cyclic cover $X$ given  in eq. (\ref{cyccov}) might have a bigger automorphism group than the cyclic group of order $n$, if the roots $\{b_i, 1\leq i \leq s\}$ form a special configuration. Notice also that if the number $s$ of branched  points satisfies $s > 2n$ then the automorphism group $G$ fits in a short exact sequence 
\begin{equation} \label{extGrp}
1 \rightarrow \Z/n\Z \rightarrow G \rightarrow H \rightarrow 1,
\end{equation}
where $H$ is a subgroup of $\mathrm{PGL}(2,\C)$ \cite[prop. 1]{Ko:99}. The first author in \cite{Ko:99} classified all such extensions.

Observe that the action of the mapping class group on homology is of topological nature and hence independent of the special configuration of the roots $b_i$. If these roots have a special configuration, then certain elements of the mapping class group become automorphisms of the curve. This phenomenon is briefly explained on page 895 of \cite{McMullenBraidHodge}.

Similarly, suppose that the set $b_1,\ldots,b_s$ is fixed point wise by the absolute Galois group, that is $b_1,\ldots,b_s \in \mathbb{P}^1(\Q)$. The action of elements of  $\mathrm{Gal}(\bar{\Q}/\Q)$  on homology is the same for all such selections of $\{b_1,\ldots,b_s\} \subset \mathbb{P}^1(\Q)$. 
However if these roots $b_i$ have a special configuration, then certain elements of $\mathrm{Gal}(\bar{\Q}/\Q)$ become automorphisms of the group. 


If the branch locus $\{b_i:1\leq i \leq s\}$ is invariant under the group $H$ then $H_1(X,\Z)$ is a $\Z[G]$ module, where $G$ is an extension of $H$ with kernel $\Z/n\Z$ given by eq. (\ref{extGrp}). 
\end{remark}

\subsection{Adding the missing punctures}
Let us now relate the group $R=\pi_1(Y,y_0)$ corresponding to the complete curve $Y$ with the group 
$R_0$ corresponding to the open curve $Y_0=Y-\pi^{-1}(\Sigma)$. 
We  know that the group $R_0$ admits a presentation 
 \[
R_0=\langle a_1,b_1,\ldots, a_g,b_g,\gamma_1,\ldots,\gamma_s
| \gamma_1 \gamma_2 \cdots \gamma_s \cdot [a_1,b_1][a_2,b_2] \cdots
[a_g,b_g]=1
 \rangle,
 \]
 where $g$ is the genus of $Y$.

\begin{convention} 
Given $\gamma_1,\ldots,\gamma_s$ group elements   we will denote by $\langle \gamma_1,\ldots,\gamma_s \rangle$ the closed normal group generated by these elements. In the case of usual  groups the extra ``closed'' condition is automatically satisfied, since these groups have the discrete topology. So the ``closed group'' condition has a non-trivial  meaning only in the pro-$\ell$ case.
\end{convention}

 The completed curve $Y$ has a fundamental group which admits a presentation of the form
\begin{align*}
R & = \langle a_1,b_1,a_2,b_2,\ldots, a_g,b_g |  [a_1,b_1][a_2,b_2] \cdots
[a_g,b_g]=1
 \rangle
\\
 &=\frac{R_0}{\langle \gamma_1,\ldots,\gamma_s\rangle}.
\end{align*}
There is the following short exact sequence relating the two homology groups:
\begin{equation} \label{relate-gamma}
\xymatrix@R-15pt@C=10pt{
0 \ar[r] &
\langle \gamma_1, \ldots,\gamma_s \rangle \ar[r] &
H_1(Y_0,\mathbb{Z}) \ar[r] \ar[d]^{\cong} &
H_1(Y,\mathbb{Z}) \ar[r] \ar[d]^{\cong}
&
0\\
&  &   R_0/R_0' \ar[r] & R/R' =R_0/R_0' \langle \gamma_1,\ldots,\gamma_s \rangle & &
}
\end{equation}
Note that if a group acts on $R_0$,
then this action can be extended to an action of $R_0/\langle \gamma_1,\ldots,\gamma_s \rangle$ if and only if the group keeps $\langle \gamma_1,\ldots,\gamma_s \rangle$ invariant.

\section{Examples- Curves with punctures}
\label{sec:Uniform-ram}

\begin{definition} \label{defCs}

Recall that 
$X_s=\mathbb{P}^1\backslash \Sigma$, where $\Sigma$ is a subset of $\mathbb{P}^1$ consisted of $s$ points. 
Consider the projection
\[
0 \rightarrow I \rightarrow H_1(X_s,\mathbb{Z}) 
\stackrel{\alpha}{\longrightarrow} 
\mathbb{Z}
\rightarrow  
0
\]
and let $C_s$ be the  curve given as quotient $Y/I$, so that $\mathrm{Gal}(C_s/X_s)=\mathbb{Z}$. The map $\alpha$ is the winding number map which can be defined both on the fundamental group and on its abelianization by: $(1 \leq i_1,\ldots,i_t \leq s, \ell_{i_1},\ldots,\ell_{i_t} \in \Z)$
\begin{equation}
\label{w-map}
	\alpha:\pi_1(X_s,x_0)  \longrightarrow \mathbb{Z} \qquad
x_{i_1}^{\ell_{i_1}} x_{i_2}^{\ell_{i_2}} \cdots x_{i_t}^{\ell_{i_t}} \mapsto
\sum\limits_{\mu=1}^t \ell_{i_\mu}.
\end{equation}
\end{definition}

The following map is a  pro-$\ell$ version of the $w$-map defined in eq. (\ref{w-map}). Let $\mathfrak{F}_{s-1}$ be the free pro-$\ell$ group in generators  $  x_1,\ldots,x_{s-1}$.
Consider the map 
\begin{equation}
\label{a-map}
\alpha: \mathfrak{F}_{s-1}\rightarrow 
\mathfrak{F}_{s-1}/\langle x_1x_j^{-1}, j=2,\ldots,s-1 \rangle \cong
\mathfrak{F}_1\cong \Z_\ell.
\end{equation}
The map $\alpha$ is continuous so if $v_n$ is a sequence of words in $F_{s-1}$ converging to $v\in \mathfrak{F}_{s-1}$, then 
\[
\lim_n \alpha(v_n)= \alpha(v) \in \Z_\ell.
\]

\subsection{On certain examples of cyclic covers of $\mathbb{P}^1$}
Consider the  commutative diagram below on the left:  

\begin{minipage}{0.3\textwidth}
$
\xymatrix{
	\tilde{X}_s \ar[ddd]|-{F_{s-1}} \ar[drr]|-{F_{s-1}'} \ar[ddr]^{R_0} &  & \\
	 & &  Y \ar@/^2pc/[ddll]^{H_1(X_s,\mathbb{Z})} \ar[dl]_{I} \\
	  & C_s \ar[dl]_{\mathbf{Z}} & \\
	 X_s &
}
$
\end{minipage}
\begin{minipage}{0.65\textwidth}
Then $H_1(C_s,\mathbb{Z})=R_0/R_0'$, where $R_0=\pi_1(C_s)$ is the free subgroup of $F_{s-1}$ corresponding to $C_s$. Moreover $H_1(C_s,\mathbb{Z})$ is a free  $\mathbb{Z}[\mathbf{Z}]$-module free of rank $s-2$ acted on also by $B_{s-1}$ giving rise to the so called Burau representation:
\[
\rho: B_{s-1} \rightarrow \mathrm{GL}(s-2,\mathbb{Z}[t,t^{-1}]).
\]
 Keep in mind that $\mathbb{Z}[\mathbf{Z}]\cong \mathbb{Z}[t,t^{-1}]$. In what follows will give a proof of these facts using the Schreier's lemma.
\end{minipage}

\begin{lemma}
The group $R_0$, is an infinite rank group and is freely generated  by the set
\begin{equation}
\label{R-free-gen}
	\{x_1^i x_j x_1^{-i-1}: i\in \mathbb{Z}, j\in {2,\ldots, s-1} \}.
\end{equation}
\end{lemma}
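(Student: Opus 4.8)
The plan is to apply Schreier's lemma (Lemma \ref{lemma:schreier}) directly to the subgroup $R_0 = \pi_1(C_s) \leq F_{s-1}$. Since $\mathrm{Gal}(C_s/X_s) = \mathbf{Z}$, the subgroup $R_0$ is the kernel of the winding number map $\alpha: F_{s-1} \to \mathbf{Z}$ of eq. (\ref{w-map}), so the cosets of $R_0$ are indexed by $\mathbb{Z}$ via the exponent sum. First I would exhibit an explicit Schreier transversal: take $T = \{x_1^i : i \in \mathbb{Z}\}$. One checks this is a transversal because $\alpha(x_1^i) = i$ realizes every coset exactly once, and it is a Schreier transversal because all initial segments of the reduced word $x_1^i$ (namely $x_1^j$ for $j$ between $0$ and $i$ with the same sign) again lie in $T$, as required by eq. (\ref{Xs-def1}).

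Next I would compute the Schreier generators $\gamma(t,x) = tx\,\overline{tx}^{-1}$ for $t = x_1^i \in T$ and $x = x_j \in X = \{x_1, \ldots, x_{s-1}\}$. For $x = x_1$: $\overline{x_1^i x_1} = x_1^{i+1}$, so $\gamma(x_1^i, x_1) = x_1^i x_1 x_1^{-(i+1)} = 1$, and these contribute nothing. For $x = x_j$ with $j \geq 2$: since $\alpha(x_1^i x_j) = i+1$, we have $\overline{x_1^i x_j} = x_1^{i+1}$, hence $\gamma(x_1^i, x_j) = x_1^i x_j x_1^{-i-1}$, which is nontrivial for all $i \in \mathbb{Z}$. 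By Schreier's lemma, $R_0$ is freely generated by exactly the set $\{x_1^i x_j x_1^{-i-1} : i \in \mathbb{Z},\ j \in \{2,\ldots,s-1\}\}$, which is eq. (\ref{R-free-gen}). This set is countably infinite (indexed by $\mathbb{Z} \times \{2,\ldots,s-1\}$), so $R_0$ has infinite rank, establishing the claim.

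I expect there is no serious obstacle here — the argument is essentially bookkeeping with the Reidemeister--Schreier algorithm. The only point requiring a little care is the verification that $T = \{x_1^i\}$ genuinely satisfies the Schreier (initial-segment) condition and that the coset representatives $\overline{x_1^i x_j}$ are computed correctly using the winding number: one must note that $F_{s-1}$ here is presented on the free generators $x_1,\ldots,x_{s-1}$ (the relation $x_1\cdots x_s = 1$ of eq. (\ref{free-quodis}) is already used to eliminate $x_s$), so $\alpha$ is literally the exponent-sum homomorphism on a free group, which makes the transversal computation unambiguous. A remark worth inserting is that the same transversal and computation work verbatim in the pro-$\ell$ category using eq. (\ref{a-map}), giving the topological generators of the pro-$\ell$ analogue of $R_0$, though strictly this lemma is stated only in the discrete case.
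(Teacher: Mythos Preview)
Your proposal is correct and follows essentially the same approach as the paper: both choose the Schreier transversal $T=\{x_1^i:i\in\mathbb{Z}\}$ for the kernel of the winding-number map and read off the free generators via Schreier's lemma. Your write-up is in fact slightly more careful in explicitly verifying the initial-segment condition and in separating out the trivial $j=1$ generators.
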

\begin{proof}
Consider the epimorphisms
\[
	\xymatrix{
	F_{s-1} \ar[r]^-{p'} \ar@/_1.3pc/[rr]_{\alpha}& F_{s-1}/F_{s-1}' \ar[r]^-{p''} & \mathbb{Z}=H_1(Y,X_s)/I.
	}
\]
Set $\alpha=p''\circ p'$.
Let $y$ be an element in $\alpha^{-1}(1_\mathbb{Z})$. By the properties of the winding number we can take as $y=x_1$. Moreover $\alpha(x_j)=y$ for all $1\leq j \leq s-1$, since the
automorphism $x_i \leftrightarrow x_j$ is compatible with $I$ and therefore introduces an automorphism of $\mathbb{Z}$, so $\alpha(x_j)=y^{\pm 1}$, and we rename the generators $x_i$ to $x_i^{-1}$ if necessary.

Let $T:=\{y^i:i\in \mathbb{Z}\}\subset F_{s-1}$ be  a set of representatives of classes in $F_{s-1}/R_0\cong \mathbf{Z}$. The set $T$ is a  Schreier transversal, and Schreier's lemma can be applied, see lemma \ref{lemma:schreier}. For every $x\in F_{s-1}$ we will denote by $\bar{x}$ the representative in $T$. Moreover for all $i\in \mathbb{Z}$ and $1\leq j \leq s-1$ we have $\overline{y^i x_j}=y^{i+1}$ and by the Schreier's lemma  we see that
\[
y^i x_j \left(\overline{y^i x_j}\right)^{-1}=y^i x_j y^{-i-1}=
	x_1^i x_j x_1^{-i-1} 
	\qquad i\in \mathbb{Z}, j\in {2,\ldots, s-1} .
\]
\end{proof}
\begin{remark}  
The action of $\Z[\mathbf{Z}]$ on $R_0/R'_0$ is given by conjugation.
This means that for $n \in \Z$ we have
\begin{align}
\label{Zaction}
 \Z[\mathbf{Z}] \times R_0 & \longrightarrow R_0 \\
   (t^n, r) & \longmapsto x_1^n r x_1^{-n} \nonumber
\end{align}
A generating set for $H_1(C_s,\mathbb{Z})$ as a free $\Z[\mathbf{Z}]$-module  is given by the $s-2$ elements $\beta_j:=x_jx_1^{-1}$. Moreover the $\mathbf{Z}$-action is given by
\[
\left( x_ix_1^{-1}\right)^{t^n}=x_1^{n}x_ix_1^{-n-1},
\]
where  $t$ is a generator of the infinite cyclic group $\mathbf{Z}$. This means that 
 $H_1(C_s,\mathbb{Z})$ is a free $\mathbb{Z}[\mathbf{Z}]$-module of rank $s-2$.

Observe that in $R_0/R_0'$ we have 
\begin{align*}
x_j(x_i x_1^{-1}) x_j^{-1} &= 
(x_j x_1^{-1}) x_1\beta_i x_1^{-1} (x_j x_1^{-1})^{-1}\\
&= \beta_j x_1 \beta_i x_1^{-1} \beta_j^{-1} =\beta_i^{t},
\end{align*}
i.e. the conjugation by any generator $x_j$
has the same effect as the conjugation by $x_1$.
\end{remark}

\noindent
\begin{minipage}{0.57\textwidth}
Let us now consider a finite cyclic cover $Y_n$ of $X_s$ which is covered by $C_s$, i.e. we have the diagram on the right bellow:
\begin{lemma}
\label{lemma:funRn}
The group $R_n=\pi_1(Y_n)\supset R_0$ is the kernel of the map $\alpha_n$
\[
	\xymatrix{
	\pi_1(X) \ar[r]^-\alpha \ar@/^1pc/[rr]^{\alpha_n}& \mathbf{Z} \ar[r] & \mathbb{Z}/n\mathbb{Z}.
	}
\]
\end{lemma}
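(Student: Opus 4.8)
The plan is to identify $R_n = \pi_1(Y_n)$ by tracing through the tower of covers $C_s \to Y_n \to X_s$ and using the defining property of $Y_n$ as the intermediate cover with $\mathrm{Gal}(Y_n/X_s) = \Z/n\Z$. First I would recall that by covering space theory the subgroups of $F_{s-1} = \pi_1(X_s)$ containing $R_0 = \pi_1(C_s)$ are in inclusion-reversing bijection with the quotient groups of $\mathrm{Gal}(C_s/X_s) = F_{s-1}/R_0 \cong \mathbf{Z}$, equivalently with the subgroups of $\mathbf{Z}$. The cover $Y_n$ sits between $C_s$ and $X_s$, and the hypothesis $\mathrm{Gal}(Y_n/X_s) = \Z/n\Z$ forces the corresponding quotient of $\mathbf{Z}$ to be $\Z/n\Z$, i.e. the subgroup $R_n/R_0$ of $F_{s-1}/R_0 = \mathbf{Z}$ must be $n\mathbf{Z}$.

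Next I would translate this into the statement of the lemma. Since $\alpha\colon \pi_1(X_s) \to \mathbf{Z}$ is, by Definition \ref{defCs} and the preceding lemma, exactly the quotient map $F_{s-1} \to F_{s-1}/R_0$ (with kernel $R_0$), composing with the reduction $\mathbf{Z} \to \Z/n\Z$ gives $\alpha_n$, whose kernel is the preimage under $\alpha$ of $n\mathbf{Z} \subset \mathbf{Z}$. By the previous paragraph this preimage is precisely $R_n$. Concretely, $\ker \alpha_n = \{\, w \in F_{s-1} : \alpha(w) \equiv 0 \pmod n \,\}$, which on the level of the winding-number description of eq.~\eqref{w-map} is the set of words whose total exponent sum is divisible by $n$; this is manifestly a normal subgroup of index $n$ containing $R_0$, so it is the fundamental group of a cover with cyclic Galois group of order $n$ lying between $C_s$ and $X_s$, hence equals $R_n$.

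I would then note the inclusion $R_0 \subset R_n$ is immediate, since $\alpha(R_0) = 0 \subset n\mathbf{Z}$, confirming that $C_s$ indeed covers $Y_n$ as claimed in the diagram. The only point requiring a little care — the ``main obstacle,'' though it is mild — is making sure the identification of $Y_n$ is unambiguous: a priori ``the finite cyclic cover of $X_s$ covered by $C_s$ with Galois group $\Z/n\Z$'' could refer to different subgroups of $\mathbf{Z}$ if $\mathbf{Z}$ had several index-$n$ subgroups, but $\mathbf{Z} \cong \Z$ has a unique subgroup of each index, namely $n\mathbf{Z}$, so $Y_n$ is well defined and the description $R_n = \ker \alpha_n$ is forced. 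In the pro-$\ell$ setting one argues identically using the continuous map $\alpha$ of eq.~\eqref{a-map} and the unique open subgroup of index $\ell^k$ in $\Z_\ell$ when $n = \ell^k$.
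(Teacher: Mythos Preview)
Your argument is correct and follows essentially the same idea as the paper: both use the Galois correspondence for the tower $C_s \to Y_n \to X_s$ together with the identification $R_0 = \ker\alpha$ to conclude $R_n = \ker\alpha_n$. The paper's proof is a one-liner (``clear from the explicit description of $R_0$''), whereas you spell out the correspondence and the uniqueness of the index-$n$ subgroup of $\mathbf{Z}$, which is a welcome clarification but not a different method.
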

\begin{proof}
This is clear from the explicit description of the group $R_0$ given in eq. (\ref{R-free-gen}). 
\end{proof}
\end{minipage}
\begin{minipage}{0.4\textwidth}
$
\xymatrix@R=16pt{
	\tilde{X}_s \ar[dddd] \ar[drr] \ar[ddr]^{R_0} &  & \\
	 & &  Y \ar@/^4pc/[dddll]^{H_1(X_s,\mathbb{Z})} \ar[dl]_{I} \\
	  & C_s \ar[ddl]_{\mathbf{Z}}  \ar[d] & \\
      & Y_n \ar[dl]^{\mathbb{Z}/n\mathbb{Z}} \\
	 X_s &
}
$
\end{minipage}

\begin{lemma} \label{Rngenerators}
The group $R_n$ is generated by
\[
R_n=
\{
x_1^i x_j x_1^{-i-1}: 0 \leq i \leq n-2, 2\leq j\leq  s-1 \}
\cup
\{x_1^{n-1}x_j: 1\leq j \leq s-1\}.
\]
which is a free group on $r=(s-2)n+1$ generators.
\end{lemma}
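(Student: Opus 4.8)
The plan is to apply Schreier's lemma (Lemma \ref{lemma:schreier}) directly, using a Schreier transversal for $R_n$ in $F_{s-1}$ that refines the one used for $R_0$. By Lemma \ref{lemma:funRn}, $R_n = \ker(\alpha_n)$ where $\alpha_n = (\pi_1(X_s)\to \mathbf{Z}\to \Z/n\Z)$, so $F_{s-1}/R_n \cong \Z/n\Z$ and $[F_{s-1}:R_n]=n$. The natural transversal is $T_n = \{x_1^0, x_1^1, \ldots, x_1^{n-1}\}$: this is a set of reduced words whose initial segments are again in $T_n$, and since $\alpha_n(x_1^i)=i \bmod n$ these represent the $n$ distinct cosets. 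So $T_n$ is a Schreier transversal.

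Next I would compute the Schreier generators $\gamma(t,x) = tx\,\overline{tx}^{-1}$ for $t = x_1^i$ ($0\le i\le n-1$) and $x = x_j$ ($1\le j\le s-1$). Exactly as in the proof of the lemma describing $R_0$, one has $\overline{x_1^i x_j} = x_1^{i+1}$ when $i+1 \le n-1$, giving the generators $x_1^i x_j x_1^{-i-1}$; note that for $j=1$ this is trivial, so only $2\le j\le s-1$ contributes, yielding the first family $\{x_1^i x_j x_1^{-i-1} : 0\le i\le n-2,\ 2\le j\le s-1\}$, which has $(n-1)(s-2)$ elements. For $i = n-1$, however, $\alpha_n(x_1^{n-1}x_j) = n \equiv 0$, so $\overline{x_1^{n-1}x_j} = x_1^0 = 1$, and the Schreier generator is $x_1^{n-1}x_j$ itself — this time for all $1\le j\le s-1$ (including $j=1$, which gives the nontrivial element $x_1^n$), yielding the second family $\{x_1^{n-1}x_j : 1\le j\le s-1\}$ with $s-1$ elements. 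None of these is trivial, so by Schreier's lemma $R_n$ is freely generated by their union.

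Finally, the rank count: $(n-1)(s-2) + (s-1) = (s-2)n - (s-2) + (s-1) = (s-2)n + 1$, matching the claimed $r = (s-2)n+1$. (As a sanity check this is consistent with the Nielsen–Schreier formula, $\operatorname{rank}(R_n) - 1 = [F_{s-1}:R_n](\operatorname{rank}(F_{s-1})-1) = n(s-2)$.) I do not anticipate a genuine obstacle here; the only point requiring care is the bookkeeping at the "boundary" index $i=n-1$, where the coset representative wraps around to $1$ rather than advancing, which is precisely why the second family includes the index $j=1$ that was excluded from the first.
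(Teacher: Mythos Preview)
Your proposal is correct and follows essentially the same approach as the paper: both apply Schreier's lemma with the transversal $T=\{x_1^i:0\le i\le n-1\}$, compute $\overline{x_1^i x_j}$ separately for $i\le n-2$ and $i=n-1$, and observe that only the $j=1$ case in the first family collapses to the identity. Your added rank count and the sanity check against the Nielsen--Schreier formula are exactly what the paper records in the remark immediately following the lemma.
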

\begin{proof}
In this case the transversal set equals $T=\{y^i: 0 \leq i \leq n-1\}$.
Moreover
\[
\overline{y^i x_j}=
\begin{cases}
y^{i+1} & \text{ if } i< n-1 \\
1 & \text { if } i=n-1.
\end{cases}
\]
For all $i$, $0 \leq i \leq n-1$ and for all generators $x_j$, $1\leq j \leq s-1$ we compute
\[
y^i x_j (\overline{y^i x_j})^{-1}=
\begin{cases}
y^i x_j y^{-i-1}=x_1^ix_jx_1^{-i-1} & \text{ if } 0\leq i \leq n-2 \\
y^{n-1} x_j=x_1^{n-1} x_j & \text{ if } i=n-1
\end{cases}
\]
Keep in mind that if $j=1$ then $x_1^{i}x_j x_1^{-i-1}=1$ and this value does not give us a generator. On the other hand the expression $x_1^{-1}x_j$ survives even if $j=1$. 
The desired result follows.
\end{proof}
\begin{proposition}
\label{prop:GalModRn}
The  $\mathbb{Z}$-module $R_n/R_n'$ as $\mathbb{Z}[\mathbb{Z}/n\mathbb{Z}]$-module is isomorphic to
\[
R_n/R_n' = \mathbb{Z}[\mathbb{Z}/n\mathbb{Z}]^{s-2} \bigoplus \mathbb{Z}.
\]
\end{proposition}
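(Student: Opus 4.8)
The plan is to use the explicit free generating set for $R_n$ provided by Lemma \ref{Rngenerators} and to track the $\Z/n\Z$-action by conjugation by $x_1$ on the abelianization $R_n/R_n'$. Write $y=x_1$ and recall that $\mathrm{Gal}(Y_n/X_s)=\Z/n\Z=\langle\sigma\rangle$, where $\sigma$ is represented by $y$ acting on $R_n$ by conjugation $r\mapsto yry^{-1}$; this is well defined modulo $R_n'$ and modulo the choice of coset representative, since $R_n\lhd F_{s-1}$. The free generators of $R_n$ fall into two families: the $(s-2)n$ elements $c_{i,j}:=x_1^i x_j x_1^{-i-1}$ for $0\le i\le n-2$, $2\le j\le s-1$, and the $s-1$ elements $d_j:=x_1^{n-1}x_j$ for $1\le j\le s-1$. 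In $R_n/R_n'$, abbreviate the images by the same symbols. I would first record the single relation that ties the two families together and then identify the module structure.

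First I would compute the $\sigma$-action on the $c_{i,j}$. For $0\le i\le n-3$ we have $x_1 c_{i,j} x_1^{-1}=c_{i+1,j}$ directly. For $i=n-2$, conjugation gives $x_1\,x_1^{n-2}x_j x_1^{-n+1}\,x_1^{-1}=x_1^{n-1}x_j x_1^{-n}$, and I would rewrite this in terms of the generators: $x_1^{n-1}x_j x_1^{-n}=d_j\cdot(x_1^{n-1}x_1 x_1^{-n})\cdot\text{(correction)}$; more cleanly, $x_1^{n-1}x_j x_1^{-n}=(x_1^{n-1}x_j)(x_1^{n-1}x_1)^{-1}=d_j d_1^{-1}$ as an element of $F_{s-1}$, hence in $R_n/R_n'$ we get $\sigma\cdot c_{n-2,j}=d_j-d_1$ (additively). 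Thus for each fixed $j\in\{2,\dots,s-1\}$ the elements $c_{0,j},c_{1,j},\dots,c_{n-2,j}$ together with the combination $d_j-d_1$ form a single $\sigma$-orbit of length $n$ whose sum is fixed: setting $c_{n-1,j}:=d_j-d_1$, the $\Z[\sigma]$-submodule they generate is $\Z[\Z/n\Z]$ on the generator $c_{0,j}$. This produces the summand $\Z[\Z/n\Z]^{s-2}$, with basis vectors indexed by $j=2,\dots,s-1$.

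Next I would handle the leftover rank. The ambient group $R_n/R_n'$ is free abelian of rank $(s-2)n+1$ by Lemma \ref{Rngenerators}, while the submodule just constructed has rank $(s-2)n$. The remaining generator is $d_1=x_1^{n-1}x_1=x_1^n$, and I would check that $\sigma\cdot d_1=x_1\cdot x_1^n\cdot x_1^{-1}=x_1^n=d_1$, so $\Z d_1$ is a trivial $\Z/n\Z$-module $\cong\Z$, and one verifies it is a direct complement: the change of basis from $\{c_{i,j}\}\cup\{d_j\}$ to $\{c_{i,j}:0\le i\le n-2\}\cup\{c_{n-1,j}:=d_j-d_1\}\cup\{d_1\}$ is unimodular (it is block upper-triangular with $\pm1$ on the diagonal once one fixes $j$ and includes the $d_1$ row), so it realizes $R_n/R_n'\cong \Z[\Z/n\Z]^{s-2}\oplus\Z$ as $\Z[\Z/n\Z]$-modules. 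This matches the appearance of the free module $R_0/R_0'=\Z[\mathbf{Z}]^{s-2}$ in the $C_s$ case, specialized along $\mathbf{Z}\twoheadrightarrow\Z/n\Z$, plus the extra $\Z$ coming from the fact that $Y_n$ is a finite cover (the loop $x_1^n$ becomes a genuine element rather than escaping to infinity).

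The main obstacle I anticipate is the bookkeeping at the "wrap-around" index $i=n-2$: one must be careful that $y^{n-1}x_j$ is the correct Schreier generator (the coset representative $\overline{y^{n-1}x_j}=1$, not $y^n$), and that the rewriting $x_1^{n-1}x_jx_1^{-n}=d_jd_1^{-1}$ is performed in $F_{s-1}$ before passing to the abelianization, so that no stray commutator terms are dropped. A secondary point to get right is the claim that the exhibited change of basis is unimodular over $\Z$ — this is what upgrades "there is a $\Z[\Z/n\Z]$-equivariant surjection" to an isomorphism of $\Z[\Z/n\Z]$-modules, and it should be checked by writing the transition matrix explicitly in the block form indexed by $j$ and the singleton $d_1$.
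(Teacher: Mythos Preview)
Your argument is correct and follows essentially the same route as the paper: both identify the $\sigma$-orbit of $\beta_j=x_jx_1^{-1}$ (your $c_{0,j}$) as the elements $x_1^i x_j x_1^{-i-1}$ for $0\le i\le n-1$, observe that $x_1^n$ is the leftover $\sigma$-fixed generator, and pass to the alternative basis $\{x_1^i x_j x_1^{-i-1}:0\le i\le n-1,\,2\le j\le s-1\}\cup\{x_1^n\}$, which is exactly the basis recorded in eq.~(\ref{secBaseRn}). Your write-up is in fact more explicit than the paper's about the unimodularity of the change of basis and about the wrap-around identity $x_1^{n-1}x_jx_1^{-n}=d_jd_1^{-1}$; the one step you leave implicit---that $\sigma\cdot c_{n-1,j}=c_{0,j}$, closing the orbit---follows immediately since $\sigma^n$ is conjugation by $x_1^n\in R_n$ and hence acts trivially on $R_n/R_n'$.
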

\begin{proof}
Set $\beta_j=x_j x_1^{-1}$ for $2\leq j \leq s-1$. Then the action of $\mathbb{Z}/n\mathbb{Z}=\langle \sigma \rangle$ on elements $\beta_j$ is 
given by
\[
\beta_j^{\sigma^\ell}=x_1^{\ell} 
\left(x_j x_1^{-1}
\right)
 x^{-\ell}=x_1^{\ell} x_j x_1^{-\ell-1}
\text{ for } 0 \leq \ell \leq n-1.
\]
It is clear that for each fixed $j$, $2\leq j \leq s-1$, the elements $\beta_j^{\sigma^\ell}$ generate a copy of the group algebra $\mathbb{Z}[\mathbb{Z}/n\mathbb{Z}]$. By the explicit form of the basis generators given in lemma \ref{Rngenerators} we have the alternative basis given by
\label{sec:Rn}
\begin{equation}
\label{secBaseRn}
\{ x_1^{i}x_jx_1^{-i-1}: 2 \leq j \leq s-1, 0\leq i \leq n-1\} \cup \{x_1^n\}.
\end{equation}
The result follows.
\end{proof}
\begin{remark}
The above computation is compatible with the Schreier index formula \cite[cor. 8.5 p.66]{bogoGrp} which asserts that
\begin{equation}
\label{ASRn}
r-1=n(s-2).
\end{equation}
\end{remark} 
\begin{remark}
Observe that there is no natural reduction modulo $n$ map from $H_1(C_s,\Z)$ to $H_1(Y_n,\Z)$ corresponding to the group reduction $\mathbf{Z} \rightarrow \Z/n\Z$.
\end{remark}

We collect here the generators of the open curves involved in this article. 
The curves on the third column correspond to the quotients of the universal covering space of $X_s$ by the groups of the first column. 
\begin{table}[h]
\begin{tabular}{|lllll|}
 \rowcolor{LightCyan}
 \hline
\textup{Group} & \textup{Generators} & \textup{Curve} & \textup{Galois group} 
& \textup{Homology} \\
 \hline
 \rowcolor{Gray1}
 $ F_{s-1}$ 
 & 
 $x_1,\ldots,x_{s-1}$
 & 
 $X_s$ 
 &
 $\{1\}$ & $F_{s-1}/F_{s-1}'$ 
 \\
 \rowcolor{Gray}
 $\{1\}$ & $\emptyset$ & $\tilde{X}_s$ & $F_{s-1}$ &  
 $\{1\}$
 \\
 \rowcolor{Gray1}
  $F_{s-1}'$ & $[x_i,x_j], i\neq j$ & $Y$  & $ F_{s-1}/F_{s-1}'$ & $F_{s-1}'/F_{s-1}''$
\\
\rowcolor{Gray}
$R_0$ & $ x_1^i x_j x_1^{-i-1}, 
\substack{
i\in \Z
\\
2\leq j \leq s-1}
$ & $C_s$ & $\mathbf{Z}$ & $R_0/R_0'$ 
\\
\rowcolor{Gray1}
$R_n$ &
$\begin{array}{l} 
x_1^i x_j x_1^{-i-1}, \substack{0\leq i \leq n-2 \\ 2 \leq j \leq s-1}
\\
x_1^{n-1}x_j, 1\leq j \leq s-1
\end{array}
$
& $Y_n$ & $\Z/n\Z$ & $R_n/R_n'$
\\
\hline
\end{tabular}
\caption{Generators and homology \label{Tab:hom}}
\end{table}

%
\subsection{The Burau Representation}
\label{sec:BurauDiscrete}
Consider the action of a generator $\sigma_i$ of $B_s$ seen as an automorphism of the free group, given for $1\leq i,j \leq s-2$ as
\[
\sigma_i(x_j)=
\begin{cases}
x_j & \text{ if } j\neq i, i+1 \\
x_i & \text{ if } j=i+1\\
x_i x_{i+1} x_i^{-1} & \text{ if } j=i
\end{cases}
\]
Therefore the conjugation action on the generators $\beta_j=x_j x_1^{-1}$ of $R_0$, seen as a $\mathbb{Z}[\mathbf{Z}]$-module, is given for $j\geq 2$ by:
\[
\sigma_j(\beta_{j+1})=\sigma_j(x_{j+1}x_1^{-1})=x_jx_1^{-1}=\beta_j,
\]
\begin{align*}
\sigma_j(\beta_j) &= \sigma_j(x_jx_1^{-1}) 
 = x_j \cdot x_{j+1} \cdot x_{j}^{-1} \cdot x_1^{-1} =  x_j x_1^{-1} \cdot x_1 x_{j+1} x_1^{-2} x_1^2  x_{j}^{-1} \cdot x_1^{-1} \\
 &=  \beta_j x_1\beta_{j+1}x_1^{-1} x_1\beta_j^{-1} x_1^{-1} 
= \beta_j \beta_{j+1}^t \beta_j^{-t}=\beta_j^{1-t} \beta_{j+1}^t.
\end{align*}
The notation for $t$ above is in accordance with the group algebra notation $\Z[\mathbf{Z}]=\Z[t,t^{-1}]$.
Also in the special case where $j=1$ we compute:
\[
\sigma_1(\beta_{2})=\sigma_1(x_{2}x_1^{-1})=x_1 \cdot x_1 x_2^{-1} x_1^{-1}=\beta_2^{-t},
\]
and  if  $i > 2$
\begin{align*}
\sigma_1(\beta_i) &= \sigma_1(x_ix_1^{-1}) 
=  x_i \cdot x_{1}  x_{2}^{-1}  x_1^{-1} 
 =  x_i x_1^{-1} \cdot x_1 x_{1} x_2^{-1}  x_{1}^{-1} 
 =  \beta_i  \beta_{2}^{-t}.
\end{align*}
We now compute the action on the $\mathbb{Z}[\mathbf{Z}]$-module $R/R'$, so the $\beta_i,\beta_j$ are commuting and we arrive at the matrix of the action with respect to the basis $\{\beta_2,\ldots,\beta_{s-1}\}$:
\[
\sigma_j \mapsto
\begin{pmatrix}
\mathrm{Id} &     &   & \\
            & 1-t & 1 & \\
            & t   & 0 &   \\
            &     &   & \mathrm{Id}
\end{pmatrix}, \text{if} \ \
\ j\neq 1
\text{ 
and }
\sigma_1 \mapsto
\begin{pmatrix}
-t          &  -t   &    & -t \\
 0          &   1   &    & 0 \\
 \vdots         &  \ddots  &  \ddots   &   \\
 0          &  \cdots    &  0   & 1
\end{pmatrix}.
\]
\begin{lemma} \label{CommActions}
The action of $t$ on $R_0^{\mathrm{ab}}$ commutes with the action of the braid group. 
\end{lemma}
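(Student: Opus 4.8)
The plan is to verify the commutation directly on generators, using the explicit formulas for the braid action derived just above the statement together with the description of the $\mathbf{Z}$-action by conjugation from equation~(\ref{Zaction}). Recall that $R_0^{\mathrm{ab}}=R_0/R_0'$ is free over $\Z[\mathbf{Z}]=\Z[t,t^{-1}]$ on the classes $\beta_2,\ldots,\beta_{s-1}$, that $t$ acts by $\beta_i\mapsto\beta_i^t$ (i.e. conjugation by $x_1$), and that each $\sigma_j$ acts by the matrices displayed above. Since everything is additive, it suffices to check $\sigma_j(t\cdot\beta_i)=t\cdot\sigma_j(\beta_i)$ for each basis element $\beta_i$ and each standard generator $\sigma_j$ of $B_s$.

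First I would handle the generic case $j\neq 1$. Here $\sigma_j$ fixes every $\beta_i$ with $i\neq j,j+1$, so commutation with $t$ is trivial on those, and on the block $\{\beta_j,\beta_{j+1}\}$ one has $\sigma_j(\beta_j)=\beta_j^{1-t}\beta_{j+1}^t$ and $\sigma_j(\beta_{j+1})=\beta_j$; applying $t$ (which is a ring element of $\Z[t,t^{-1}]$ acting linearly) before or after $\sigma_j$ gives the same answer because the matrix entries $1-t$, $t$, $1$, $0$ are scalars in the commutative ring $\Z[t,t^{-1}]$ and hence commute with multiplication by $t$. Then I would treat $j=1$ the same way, using $\sigma_1(\beta_2)=\beta_2^{-t}$ and $\sigma_1(\beta_i)=\beta_i\beta_2^{-t}$ for $i>2$: again the matrix of $\sigma_1$ has entries in $\Z[t,t^{-1}]$, so it commutes with the scalar $t$. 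In other words, once one knows that the braid action on $R_0^{\mathrm{ab}}$ is $\Z[t,t^{-1}]$-linear, the claim is immediate, because two $\Z[t,t^{-1}]$-linear endomorphisms of a $\Z[t,t^{-1}]$-module automatically commute with the central element $t$.

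The one point that genuinely needs justification — and which I expect to be the only real obstacle — is precisely that the braid action descends to a $\Z[t,t^{-1}]$-\emph{linear} map on $R_0^{\mathrm{ab}}$, rather than merely a $\Z$-linear one. This is where the computation in the remark following equation~(\ref{Zaction}) is used: there it is shown that in $R_0/R_0'$ conjugation by any generator $x_j$ has the same effect as conjugation by $x_1$, i.e. $x_j(x_ix_1^{-1})x_j^{-1}=\beta_i^{t}$. Consequently, for any word $w$ in the $x_j$'s representing an element of the braid group's image, the induced map on $R_0^{\mathrm{ab}}$ intertwines conjugation by $x_1$ on the source with conjugation by $x_1$ on the target — because $\sigma$, being an automorphism of $F_{s-1}$ fixing the winding-number map $\alpha$ up to the normalization already fixed, sends the transversal element $x_1$ to something that acts on $R_0^{\mathrm{ab}}$ the same way $x_1$ does. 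I would phrase this as: the braid group preserves the subgroup $R_0$ and the coset structure $F_{s-1}/R_0\cong\mathbf{Z}$, hence commutes with the deck-transformation action, which is exactly the $t$-action. With that structural observation in place, the lemma follows with no further calculation; the displayed matrix identities above merely serve as an explicit confirmation.
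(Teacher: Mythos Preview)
Your argument is correct, and its substantive core (the third paragraph) is exactly the paper's approach: the paper simply observes that $\sigma_j(a^t)=\sigma_j(x_1)\,\sigma_j(a)\,\sigma_j(x_1)^{-1}$, and since $\sigma_j(x_1)=x_1$ for $j\geq 2$ while $\sigma_1(x_1)=x_1x_2x_1^{-1}=x_1\beta_2$ differs from $x_1$ by an element of $R_0$, conjugation by $\sigma_j(x_1)$ agrees with conjugation by $x_1$ on $R_0/R_0'$. This is precisely your ``$\sigma$ sends the transversal element $x_1$ to something that acts on $R_0^{\mathrm{ab}}$ the same way $x_1$ does,'' i.e.\ the braid group acts trivially on $F_{s-1}/R_0$ and hence commutes with the deck action.

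Two remarks on presentation. First, your opening two paragraphs are, as you yourself note, circular: checking $\sigma_j(t\cdot\beta_i)=t\cdot\sigma_j(\beta_i)$ on the $\Z[t,t^{-1}]$-basis $\{\beta_i\}$ only suffices once you know $\sigma_j$ is $\Z[t,t^{-1}]$-linear, which is the statement to be proved; the paper skips this detour entirely and goes straight to the conjugation computation. Second, the phrase ``for any word $w$ in the $x_j$'s representing an element of the braid group's image'' is garbled --- braid elements are automorphisms of $F_{s-1}$, not words in the $x_j$ --- but your subsequent clauses make clear what you mean, and the final structural summary (braid group preserves $R_0$ and the coset structure, hence commutes with the deck action) is a clean and correct way to say it.
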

\begin{proof}
It is obvious that for $\sigma_{j}$ $j\geq 2$ and $a\in R_0$ we have
\[
\sigma_j(a^t)=\sigma_{j}(x_1 a x_1^{-1})=x_1 \sigma_j(a) x_1^{-1}
=(\sigma_j(a))^t.
\]
For $\sigma_1$ we observe that
\begin{align*}
\sigma_1(a^t)&= \sigma_1(x_1 a x_1^{-1})=x_1 x_2 x_1^{-1} \sigma_1(a) x_1 x_2^{-1} x_1^{-1}=
x_1 \beta_2 \sigma_1(a) \beta_2^{-1} x_1^{-1} \\
&=x_1 \sigma_1(a) x_1^{-1}=(\sigma_1(a))^t,
\end{align*}
since $\sigma_1(a)$ is expressed as product of $\beta_\nu$ and the elements $\beta_i$ commute modulo $R_0'$.
\end{proof}

\subsection{The profinite Burau representation}
\label{sec:Burau-prof}

Since the action of elements $\sigma\in 
 \mathrm{Gal}(\bar{\Q}/\Q)$ on  elements $x_i$ involves $N(\sigma) \in \Z_\ell$, we cannot 
define 
an action of the  absolute Galois group on $H_1(C_s,\Z_\ell)=H_1(C_s,\Z) \otimes_\Z \Z_\ell=\Z_\ell[\mathbf{Z}]^{s-2}$, 
in the same way we  defined the action of the braid group on $H_1(C_s,\Z)$.

Recall that we denote by  $\mathbf{Z}_\ell$  the group $\Z_\ell$ written multiplicatively, i.e.
$\mathbf{Z}_\ell \cong \langle t^\alpha, \alpha\in \Z_\ell \rangle$.
It turns out that instead of the ordinary group algebra $\Z_\ell[\mathbf{Z}]$ we need the completed group algebra $\Z_\ell[[\mathbf{Z}_\ell]]$.

In this way we see the profinite Burau representation as a linear representation:
\[
\rho_{\mathrm{Burau}}: \mathrm{Gal}(\bar{\Q}/\Q) \rightarrow \mathrm{GL}_{s-2}(\Z_\ell[[\mathbf{Z}_\ell]]).
\]
\begin{remark}
The $\Z_\ell$-algebra $\Z_\ell[[\mathbf{Z}_\ell]]$ is a ring
defined as the inverse limit 
\[
\Z_\ell[[\mathbf{Z}_\ell]]=
\lim_{\substack{\leftarrow \\ n}} 
\Z_\ell[\Z/\ell^n \Z]
\]
of the ordinary group algebra, see \cite[p.171]{RibesZalesskii}.
It contains the $\Z$-algebra $\Z[\mathbf{Z}]\cong \Z[t,t^{-1}]$ which appears in the discrete topological Burau representation as a dense subalgebra. 
\end{remark}
\begin{lemma}
\label{existLimit} 
Let $\alpha=\sum_{\nu=0}^\infty a_\nu \ell^\nu \in \Z_\ell$, $0\leq a_\nu < \ell$ for all $0\leq \nu$. 
Set
\[
A_n=\left(
1+t+t^2+\ldots+ t^{(\sum_{\nu=0}^n a_\nu \ell^\nu)-1}
\right). 
\]
Then the sequence above converges and we will denote its limit by $(t^\alpha-1)/(t-1)$, that is 
\[
\lim_{n\rightarrow \infty} 
\left(
1+t+t^2+\ldots+ t^{(\sum_{\nu=0}^n a_\nu \ell^\nu)-1}
\right)
=
\frac{t^{\alpha}-1}{t-1}.
\]
\end{lemma}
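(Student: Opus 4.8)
The plan is to work inside the complete ring $\Lambda:=\Z_\ell[[\mathbf{Z}_\ell]]$ and to show that the partial sums $A_n$ form a Cauchy sequence for its canonical (profinite) topology; completeness then produces the limit, which we name $(t^\alpha-1)/(t-1)$. First I would record the standard identification of $\Lambda$ with the Iwasawa power series ring $\Z_\ell[[T]]$ via $t\mapsto 1+T$, compatible with $\Lambda=\varprojlim_n\Z_\ell[\Z/\ell^n\Z]=\varprojlim_n\Z_\ell[t]/(t^{\ell^n}-1)$. Under this identification the canonical topology of $\Lambda$ is the $\mathfrak m$-adic one for $\mathfrak m=(\ell,T)$, the powers $\mathfrak m^k$ are open of finite index and form a neighbourhood basis of $0$, the ring $\Lambda$ is $\mathfrak m$-adically complete, $t-1=T\in\mathfrak m$, and $t$ itself is a unit (its constant term is $1\in\Z_\ell^\times$).

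Next, writing $N_n:=\sum_{\nu=0}^n a_\nu\ell^\nu\in\Z_{\ge 0}$, one has $A_n=1+t+\cdots+t^{N_n-1}=\dfrac{t^{N_n}-1}{t-1}$ as a genuine element of $\Z[t,t^{-1}]\subset\Lambda$, and $N_{n+1}-N_n=a_{n+1}\ell^{n+1}$, so
\[
A_{n+1}-A_n=t^{N_n}\bigl(1+t+\cdots+t^{M-1}\bigr)=t^{N_n}\cdot\frac{t^{M}-1}{t-1},\qquad M:=a_{n+1}\ell^{n+1}.
\]
Since $t^{N_n}$ is a unit it suffices to prove $\dfrac{t^{M}-1}{t-1}\in\mathfrak m^{\,n+1}$ (the case $a_{n+1}=0$ being vacuous). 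Expanding in $T$,
\[
\frac{t^{M}-1}{t-1}=\frac{(1+T)^{M}-1}{T}=\sum_{k=0}^{M-1}\binom{M}{k+1}T^{k},
\]
and from $j\binom{M}{j}=M\binom{M-1}{j-1}$ one gets $v_\ell\!\bigl(\binom{M}{k+1}\bigr)\ge v_\ell(M)-v_\ell(k+1)\ge (n+1)-v_\ell(k+1)$, using $v_\ell(M)\ge n+1$. Since $v_\ell(k+1)\le k$ for $k\ge 1$ (because $k+1\le\ell^{k}$), the $k$-th summand lies in $\ell^{\,(n+1)-k}T^{k}\Lambda\subseteq\mathfrak m^{\,n+1}$ when $0\le k\le n$, and trivially in $T^{k}\Lambda\subseteq\mathfrak m^{\,n+1}$ when $k\ge n+1$; hence $\dfrac{t^{M}-1}{t-1}\in\mathfrak m^{\,n+1}$. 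Therefore $A_{n+1}\equiv A_n\pmod{\mathfrak m^{\,n+1}}$, the sequence $(A_n)$ is Cauchy in the complete ring $\Lambda$, and it converges; its limit is by definition $(t^\alpha-1)/(t-1)$.

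There is no deep obstacle here; the only point that must be handled with care is the binomial-valuation estimate, i.e. tracking the $\ell$-adic and the $T$-adic filtrations simultaneously inside $\mathfrak m^{\bullet}$. Should one prefer to avoid the Iwasawa model, the same conclusion is reached layer by layer in $\Lambda=\varprojlim_n\Z_\ell[\Z/\ell^n\Z]$: fixing $m$ and writing $N_n=q_n\ell^m+r_n$ with $0\le r_n<\ell^m$, the convergence $N_n\to\alpha$ in $\Z_\ell$ forces the remainders $r_n$ to stabilize and the quotients $q_n$ to be $\ell$-adically Cauchy, so the image of $A_n$ in $\Z_\ell[\Z/\ell^m\Z]$, namely $q_n\bigl(\sum_{j=0}^{\ell^m-1}\bar t^{\,j}\bigr)+\sum_{j=0}^{r_n-1}\bar t^{\,j}$, converges for every $m$, and hence $(A_n)$ converges in $\Lambda$.
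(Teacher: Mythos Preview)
Your proof is correct. The paper takes a different and much terser route: it works directly in the inverse limit $\Lambda=\varprojlim_k \Z_\ell[\Z/\ell^k\Z]$, writes $A_n-A_m=t^{N_m}\sum_{\nu=0}^{N_n-N_m-1}t^\nu$ with $N_n-N_m$ divisible by $\ell^{m+1}$, and simply declares the sequence Cauchy. This is essentially your alternative ``layer-by-layer'' sketch, only less fleshed out: the implicit point is that in each finite level $\Z_\ell[\Z/\ell^k\Z]$ one has $\sum_{\nu=0}^{c\ell^k-1}\bar t^{\,\nu}=c\sum_{\nu=0}^{\ell^k-1}\bar t^{\,\nu}$, and the coefficient $c$ is $\ell$-adically small. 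Your main argument, via the Iwasawa isomorphism $\Lambda\cong\Z_\ell[[T]]$ (with $t\mapsto 1+T$) together with the explicit binomial estimate in the $\mathfrak m=(\ell,T)$-adic filtration, is a genuinely different packaging; it yields a clean self-contained bound $A_{n+1}-A_n\in\mathfrak m^{\,n+1}$ without ever passing to finite quotients, at the cost of the Kummer-type valuation computation. One cosmetic remark: the inequality $v_\ell(k+1)\le k$ that you use for the range $0\le k\le n$ in fact holds for all $k\ge 0$ (since $v_\ell(1)=0$), not only for $k\ge 1$ as you stated; the argument is unaffected.
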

\begin{proof}
The algebra $\Z_\ell[\Z/\ell^n \Z]$ is identified by the set of all expressions $\sum_{\nu=0}^{\ell^n-1} b_\nu t_n^\nu$, where $t_n$ is a generator of the cyclic group $\Z/\ell^n \Z$ and $b_\nu \in \Z_\ell$. In the inverse limit defining the ring of $\ell$-adic numbers the generator $t_{n+1}$ of 
$\Z/\ell^{n+1}\Z$ is sent to the generator $t_n$ of $\Z/\ell^n\Z$. The corresponding map in the group algebras (by identifying $t_n=t_{n+1}=t$) is given by sending 
\[
 \Z_\ell[\Z/\ell^{n+1}\Z] \ni
 \sum_{\nu=0}^{\ell^{n+1}-1} b_\nu t^\nu
 \longmapsto
\sum_{\nu=0}^{\ell^{n}-1} b_\nu t^\nu
\in 
\Z_\ell[\Z/\ell^{n}\Z].
\] 
We compute now for $m<n$
\begin{align*}
A_n-A_m &=
\sum_{\nu=a_0+a_1\ell+\cdots+ a_m \ell^m}^
{a_0+a_1\ell+\cdots+ a_n \ell^n} t^\nu  
= t^{a_0+a_1\ell+\cdots+ a_m \ell^m} 
\sum_{\nu=0}^{a_{m+1} \ell^{m+1} + \cdots +
a_n \ell^n
} t^\nu
\end{align*}
Therefore, the sequence is Cauchy and converges in the complete group algebra $\Z_\ell[[\mathbf{Z}_\ell]]$.
\end{proof}

\begin{lemma} \label{writeInv}
We have for $\alpha\in \mathbb{N}$, $\beta_k=x_k x_1^{-1}$.
\begin{equation}
\label{n-rel}
x_k^\alpha x_1^{-\alpha}= \beta_k \cdot \beta_k^t \cdot \beta_k^{t^2} \cdots \beta_k^{t^{\alpha-1}}.
\end{equation}
For $\alpha\in \Z_\ell$ we have
\begin{equation}
\label{n-rel1}
x_k^\alpha x_1^{-\alpha}= 
\beta_k^{\frac{t^\alpha-1}{t-1}}.
\end{equation}
\end{lemma}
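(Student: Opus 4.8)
The plan is to prove the two formulas in Lemma \ref{writeInv} in turn, deriving the second from the first by a limiting argument based on Lemma \ref{existLimit}. For equation (\ref{n-rel}), I would argue by induction on $\alpha\in\mathbb{N}$. The base case $\alpha=1$ is just the definition $\beta_k=x_kx_1^{-1}$. For the inductive step, I would write $x_k^{\alpha+1}x_1^{-\alpha-1}=(x_k^\alpha x_1^{-\alpha})\cdot(x_1^\alpha x_k x_1^{-\alpha-1})$, recognize the second factor as $\beta_k^{t^\alpha}$ in the notation of equation (\ref{Zaction}) (conjugation by $x_1^\alpha$ is the action of $t^\alpha$), and apply the induction hypothesis to the first factor. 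Since everything takes place in $R_0/R_0'$, where the various conjugates $\beta_k^{t^i}$ commute, the product telescopes exactly to $\beta_k\cdot\beta_k^t\cdots\beta_k^{t^{\alpha-1}}\cdot\beta_k^{t^\alpha}$, as desired. Notice $x_k^\alpha x_1^{-\alpha}$ indeed lies in $R_0$ since it has winding number zero under the map $\alpha$ of equation (\ref{w-map}), so the computation legitimately takes place in the abelianized module $R_0/R_0'=\Z[\mathbf{Z}]^{s-2}$, where $\beta_k$ generates a free rank-one $\Z[\mathbf{Z}]$-submodule.

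Rewriting (\ref{n-rel}) additively in the $\Z[\mathbf{Z}]$-module $R_0/R_0'$, the right-hand side is $\beta_k$ acted on by $1+t+t^2+\cdots+t^{\alpha-1}=\frac{t^\alpha-1}{t-1}\in\Z[\mathbf{Z}]$, which is exactly the claimed formula (\ref{n-rel1}) for $\alpha\in\mathbb{N}$; so for positive integers the two statements coincide and there is nothing more to prove. The content of (\ref{n-rel1}) is therefore the extension to a general $\ell$-adic exponent $\alpha\in\Z_\ell$. Here I would choose a sequence of natural numbers $\alpha_n=\sum_{\nu=0}^n a_\nu\ell^\nu$ converging to $\alpha$ in $\Z_\ell$ (the partial sums of the $\ell$-adic expansion). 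On the one hand, $x_k^{\alpha_n}x_1^{-\alpha_n}$ converges to $x_k^\alpha x_1^{-\alpha}$ in the pro-$\ell$ group, hence its image converges in the completed module $\Z_\ell[[\mathbf{Z}_\ell]]^{s-2}=H_1(C_s,\Z_\ell)\otimes$(completion); on the other hand, by the integer case just proved, $x_k^{\alpha_n}x_1^{-\alpha_n}$ equals $\beta_k$ acted on by $A_n=1+t+\cdots+t^{\alpha_n-1}$, and by Lemma \ref{existLimit} the scalars $A_n$ converge to $\frac{t^\alpha-1}{t-1}\in\Z_\ell[[\mathbf{Z}_\ell]]$. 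Passing to the limit on both sides and using continuity of the module action gives $x_k^\alpha x_1^{-\alpha}=\beta_k^{(t^\alpha-1)/(t-1)}$.

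The step I expect to require the most care is the passage to the limit: one must be sure that the module $R_0/R_0'$ (equivalently $H_1(C_s,\Z_\ell)=\Z_\ell[\mathbf{Z}]^{s-2}$) has been completed appropriately so that the limit $x_k^\alpha x_1^{-\alpha}$ actually lives in it and so that the scalar action of $\Z_\ell[[\mathbf{Z}_\ell]]$ is (jointly) continuous. This is precisely the point made in the discussion preceding this lemma, namely that for the Galois action one must replace $\Z_\ell[\mathbf{Z}]$ by the completed group algebra $\Z_\ell[[\mathbf{Z}_\ell]]$ and correspondingly the module by $\Z_\ell[[\mathbf{Z}_\ell]]^{s-2}$; once that framework is in place, continuity of multiplication in the topological ring $\Z_\ell[[\mathbf{Z}_\ell]]$ and the fact that $\beta_k$ spans a free rank-one summand make the limiting argument go through. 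The inductive identity (\ref{n-rel}) itself is routine given the $\Z[\mathbf{Z}]$-module structure recorded in the earlier remarks, so the genuine mathematical input is entirely the convergence bookkeeping, which is supplied by Lemma \ref{existLimit}.
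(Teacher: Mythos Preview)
Your proposal is correct and follows essentially the same approach as the paper: induction on $\alpha\in\mathbb{N}$ for (\ref{n-rel}), then a limit along the $\ell$-adic partial sums using Lemma~\ref{existLimit} for (\ref{n-rel1}). The only cosmetic difference is that the paper factors the inductive step from the left as $x_k^n x_1^{-n}=x_k\,(x_k^{n-1}x_1^{-(n-1)})\,x_1^{-1}$ rather than from the right; note also that (\ref{n-rel}) already holds as a literal identity in $R_0$, so your appeal to commutativity in $R_0/R_0'$ is unnecessary (though harmless).
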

\begin{proof}
We will prove first the result for $\alpha=n\in \mathbb{\Z}$.  
Indeed, for $\alpha=1$ the result is trivial while by induction
\[
x_k^n x_1^{-n}  =  x_k \beta_k \cdots \beta_k^{t^{n-2}} x_1^{-1} 
 =  x_k x_1^{-1} x_1   \beta_k \cdots \beta_k^{t^{n-2}} x_1^{-1} 
=  \beta_k \cdot \beta_k^t \cdot \beta_k^{t^2} \cdots \beta_k^{t^{n-1}}
\]
Now for $\alpha=\sum_{\nu=0}^{\infty} a_\nu \ell^{\nu} \in \Z_\ell$ we consider the 
sequence $c_n=\sum_{\nu=0}^{n} a_\nu \ell^{\nu}\rightarrow \alpha$. 
We have 
\[
x_k^\alpha x_1^{-\alpha} =\lim_n x_k^{c_n} x_1^{-c_n}=
\lim_n \beta_k^{\frac{t^{c_n}-1}{t-1}} =
\beta_k^{\frac{t^{\alpha}-1}{t-1}}.
\]
\end{proof}
\begin{lemma}
\label{pass-over}
For every $i\neq 1$, and $N\in \Z_\ell$ we have 
\[
x_i^{-1}x_1^{-N} =x_1^{-N} x_i^{-1} \cdot \beta_i^{1-t^N}.
\]
More generally  for $a\in \Z_\ell^*$
\[
x_i^{-a} x_1^{-N}= x_1^{-N} x_i^{-a} \cdot \beta_i^{\frac{t^a-1}{t-1}(1-t^N)}
\]
\end{lemma}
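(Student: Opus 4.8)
The plan is to reduce both identities to the normal form for powers of $x_i$ furnished by Lemma \ref{writeInv}, after which each side collapses to the same reduced word by elementary cancellation of powers of $x_1$. First I would set $w:=\beta_i^{\frac{t^a-1}{t-1}}$, so that by eq. (\ref{n-rel1}) one has $w=x_i^a x_1^{-a}$, hence $x_i^a=wx_1^a$ and $x_i^{-a}=x_1^{-a}w^{-1}$. I would note that $w$ has winding number $\alpha(w)=\alpha(x_i^a x_1^{-a})=0$, so that $w\in R_0$ and the conjugation action of $\mathbf{Z}_\ell$ is available on it; by eq. (\ref{Zaction}) the $t^N$-action on $R_0$ is conjugation by $x_1^N$, so $\beta_i^{\frac{t^a-1}{t-1}(1-t^N)}$ is by definition $w^{1-t^N}=w\,(w^{t^N})^{-1}=w\,x_1^N w^{-1}x_1^{-N}$.

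Then the computation is immediate: substituting $x_i^{-a}=x_1^{-a}w^{-1}$ into the right-hand side of the asserted identity yields
\[
x_1^{-N}x_i^{-a}\cdot\beta_i^{\frac{t^a-1}{t-1}(1-t^N)}
= x_1^{-N}x_1^{-a}w^{-1}\cdot w\,x_1^N w^{-1}x_1^{-N}
= x_1^{-N-a}x_1^N w^{-1}x_1^{-N}
= x_1^{-a}w^{-1}x_1^{-N},
\]
which is precisely $x_i^{-a}x_1^{-N}$. The first displayed formula is the special case $a=1$, where $w=\beta_i$, so no separate argument is needed for it.

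I expect no real obstacle here, only two points to handle with care. First, for $m\in\Z_\ell[[\mathbf{Z}_\ell]]$ the symbol $\beta_i^{m}$ designates an element of (the pro-$\ell$ completion of) $R_0$ only after a choice of representative word; I would use the telescoping representative of Lemma \ref{writeInv}, with which the identity holds on the nose, and note that in any case it holds modulo $R_0'$, which is all the later homological computations require. Second, for $a\in\Z_\ell^*$ and $N\in\Z_\ell$ the powers $x_i^a$, $x_1^N$ and the series $\frac{t^a-1}{t-1}$ are defined as limits (Lemma \ref{existLimit}); since multiplication and exponentiation are continuous in the pro-$\ell$ topology, one can either carry out the displayed cancellation verbatim in the completion, or approximate $a$ and $N$ by natural numbers, prove the identity there, and pass to the limit. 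Once Lemma \ref{writeInv} is invoked, the whole argument is essentially the two displayed lines above.
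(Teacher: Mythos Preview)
Your argument is correct and is essentially the paper's own computation run in reverse: the paper inserts $x_1^{-N}x_i^{-a}\cdot x_i^{a}x_1^{N}=1$ on the left and recognizes the trailing factor $x_i^{a}x_1^{N}x_i^{-a}x_1^{-N}$ as $w\,(w^{t^N})^{-1}$ with $w=x_i^{a}x_1^{-a}=\beta_i^{(t^a-1)/(t-1)}$ from Lemma~\ref{writeInv}, while you substitute $x_i^{-a}=x_1^{-a}w^{-1}$ on the right and cancel. The key inputs (Lemma~\ref{writeInv} and the conjugation interpretation of $t^N$) are identical, and your remarks on representatives and continuity cover exactly the same care points implicit in the paper's proof.
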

\begin{proof}
We compute
\begin{align*}
x_i^{-1}x_1^{-N} &=x_1^{-N} x_i^{-1} \cdot 
x_i  x_1^{N} x_i^{-1} x_1^{-N} 
\\
& =
x_1^{-N} x_i^{-1} \cdot
x_i x_1^{-1} x_1^{N} (x_i x_1^{-1})^{-1} x_1^{-N} \\
&= x_1^{-N} x_i^{-1} \cdot\beta_i \beta_i^{-t^N} \\
& =x_1^{-N} x_i^{-1} \cdot\beta_i^{1-t^N}. 
\end{align*}
The second equality is proved the same way
\begin{align*}
x_i^{-a}x_1^{-N} &=x_1^{-N} x_i^{-a} \cdot 
x_i^{a}  x_1^{N} x_i^{-a} x_1^{-N} 
\\
& =
x_1^{-N} x_i^{-a} \cdot
x_i^a x_1^{-a} x_1^{N} (x_i^a x_1^{-a})^{-1} x_1^{-N} \\
&= x_1^{-N} x_i^{-a} \cdot\beta_i^{\frac{t^a-1}{t-1}(1-t^N)}. 
\end{align*}
\end{proof}
\begin{lemma}
For a given word $x_{s-1}^{-a_{s-1}}\cdots x_{1}^{-a_{1}}$ we have
\[
\left(
x_{s-1}^{-a_{s-1}}\cdots x_{1}^{-a_{1}}
\right) x_1^{-N}=
x_1^{-N}
\left(
x_{s-1}^{-a_{s-1}}
\beta_{s-1}^{
 \frac{
 t^{a_{s-1}}-1
 }
 {t-1}
 (1-t^N)
 }
 \cdots
  x_{2}^{-a_{2}}
  \beta_{2}^
  {
  \frac{
  t^{a_{2}}-1}
  {t-1}
  (1-t^N)
  }
  x_1^{-a_1}
  \right).
\]
\end{lemma}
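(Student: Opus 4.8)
The plan is to prove the "moving $x_1^{-N}$ past a long word" formula by a straightforward induction on the number of factors, using Lemma~\ref{pass-over} as the single-factor inductive step. The statement says that for a word $x_{s-1}^{-a_{s-1}} \cdots x_1^{-a_1}$ we can commute $x_1^{-N}$ from the right all the way to the left, at the cost of inserting a correction term $\beta_i^{\frac{t^{a_i}-1}{t-1}(1-t^N)}$ immediately after each factor $x_i^{-a_i}$ with $i \neq 1$, while the factor $x_1^{-a_1}$ (the $i=1$ factor) contributes nothing, since Lemma~\ref{pass-over} is stated only for $i \neq 1$ and indeed $\beta_1 = x_1 x_1^{-1} = 1$.

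First I would set up the induction: the base case is a single factor $x_i^{-a_i}$, which is exactly Lemma~\ref{pass-over} when $i \neq 1$ (and trivial when $i = 1$ since $x_1^{-a_1}$ and $x_1^{-N}$ commute). For the inductive step, given a word $w = x_{s-1}^{-a_{s-1}} \cdots x_{k+1}^{-a_{k+1}} \cdot x_k^{-a_k} \cdots x_1^{-a_1}$, I would write $w \cdot x_1^{-N} = (x_{s-1}^{-a_{s-1}} \cdots x_{k+1}^{-a_{k+1}}) \cdot (x_k^{-a_k} \cdots x_1^{-a_1}) x_1^{-N}$, apply the inductive hypothesis to the shorter right-hand block to pull $x_1^{-N}$ to its front, and then move that $x_1^{-N}$ through the remaining factors $x_{k+1}^{-a_{k+1}}, \ldots, x_{s-1}^{-a_{s-1}}$ one at a time via Lemma~\ref{pass-over}, each time generating the stated correction factor. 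The key point making this clean is that all the correction factors $\beta_i^{\frac{t^{a_i}-1}{t-1}(1-t^N)}$ lie in $R_0$, on which $t$ acts, so conjugating these terms by the $x_1^{-N}$ that passes over them only re-expresses them inside $R_0$ and does not change their homology class; but here we are still working at the group level before abelianizing, so I should be slightly careful.

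Actually, re-examining: since Lemma~\ref{pass-over} has the form $x_i^{-a} x_1^{-N} = x_1^{-N} x_i^{-a} \cdot \beta_i^{(\cdots)}$ with the correction appearing to the \emph{right} of $x_i^{-a}$ (not needing to be pushed further left), the induction goes through term by term without any reordering of corrections: when we process $x_{k+1}^{-a_{k+1}}$, the $x_1^{-N}$ to its right becomes $x_1^{-N}$ to its left times $\beta_{k+1}^{\frac{t^{a_{k+1}}-1}{t-1}(1-t^N)}$ inserted right after $x_{k+1}^{-a_{k+1}}$, and all previously-generated correction terms (which sit further to the right) are untouched. So the corrections accumulate in exactly the positions claimed, and the final $x_1^{-N}$ ends up at the far left. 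The $i=1$ factor $x_1^{-a_1}$ sits at the far right and, as noted, $x_1^{-a_1} x_1^{-N} = x_1^{-N} x_1^{-a_1}$, contributing no correction — consistent with the formula as displayed (which shows $x_1^{-a_1}$ with no $\beta_1$ factor after it).

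**The main obstacle** is purely bookkeeping: keeping track of the order in which Lemma~\ref{pass-over} is applied and verifying that each correction term $\beta_i^{\frac{t^{a_i}-1}{t-1}(1-t^N)}$ lands in the position shown in the display, i.e.\ directly following $x_i^{-a_i}$. There is also a minor subtlety: the exponents $\frac{t^{a_i}-1}{t-1}$ must be interpreted in $\Z_\ell[[\mathbf{Z}_\ell]]$ via Lemma~\ref{existLimit} when the $a_i$ are genuine $\ell$-adic numbers rather than integers, but this is already justified by Lemma~\ref{writeInv} and Lemma~\ref{pass-over}, so no new analysis is needed. I expect the whole argument to be a short, clean induction once the single-step lemma is in hand.
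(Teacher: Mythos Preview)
Your proposal is correct and matches the paper's proof essentially step for step: the paper also repeatedly applies Lemma~\ref{pass-over} to push $x_1^{-N}$ leftward through the word one factor at a time, first commuting it trivially past $x_1^{-a_1}$, then past $x_2^{-a_2}$ (picking up $\beta_2^{\frac{t^{a_2}-1}{t-1}(1-t^N)}$ on the right of $x_2^{-a_2}$), and so on up to $x_{s-1}^{-a_{s-1}}$. Your observation that each correction term lands to the right of its $x_i^{-a_i}$ and is never touched again by subsequent moves is exactly the mechanism that makes the paper's displayed chain of equalities work.
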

\begin{proof}
We use lemma \ref{pass-over} inductively to have
\begin{align*}
x_{s-1}^{-a_{s-1}}\cdots x_{1}^{-a_{1}} x_1^{-N}
&=
x_{s-1}^{-a_{s-1}}\cdots x_{3}^{-a_{3}}  x_1^{-N} 
x_{2}^{-a_{2}}
 \beta_{2}^{
 \frac{
 t^{a_{2}}-1
 }
 {t-1}
 (1-t^N)
 }
 x_1^{-a_1}
 \\
  &=
  x_{s-1}^{-a_{s-1}}\cdots x_{4}^{-a_{4}} 
   x_1^{-N} 
 x_{3}^{-a_{3}}
 \beta_{3}^{
  \frac{
  t^{a_{3}}-1
  }
  {t-1}
  (1-t^N)
  }
x_{2}^{-a_{2}}
 \beta_{2}^{
 \frac{
 t^{a_{2}}-1
 }
 {t-1}
 (1-t^N)
 }
 x_1^{-a_1}
 \\
&= \cdots
\\
&= 
x_1^{-N}
x_{s-1}^{-a_{s-1}}
\beta_{s-1}^{
 \frac{
 t^{a_{s-1}}-1
 }
 {t-1}
 (1-t^N)
 }
 \cdots
 x_{2}^{-a_{2}}
 \beta_{2}^{
 \frac{
 t^{a_{2}}-1
 }
 {t-1}
 (1-t^N)
 }
 x_1^{-a_1}.
\end{align*}
\end{proof}
For simplicity denote $N(\sigma)$ by $N$ and $w_i(\sigma)$ by $w$. 
We will consider $w x_i^N w^{-1} x_1^{-N}$, where
$
w^{-1}=x_{s-1}^{-a_{s-1}}\cdots x_{1}^{-a_{1}}
$.
We have
\begin{align*}
wx_i^N w^{-1} x_1^{-N} &=
\beta_i^{t^{\sum_{\nu=1}^{s-1} a_{\nu}}\frac{t^N-1}{t-1}}
\beta_{s-1}^
{
	t^{\sum_{\nu=1}^{s-2} a_{\nu}}\frac{t^{a_{s-1}}-1}{t-1}
(1-t^N)
}
\cdots
\beta_{2}^ 
{
	t^{a_1} \frac{t^{a_2}-1}{t-1}
(1-t^N)
}.
\end{align*}
An arbitrary element $w \in \mathfrak{F}_{s-1}$  can be written in a unique way as
\[
w=B\cdot x_1^{a_1}\cdots x_{s-1}^{a_{s-1}},  \qquad a_i \in \Z_\ell
\]
where $B$ is an element in the group $R_0$ generated by the elements $\beta_i$, $i=2,\ldots,s-1$. Observe now that 
for every $\beta_i$, and $N\in \Z_{\ell}$ we have 
\[
\beta_i x_1^{-N} = x_1^{-N} x_1^{N} \beta_i x_1^{-N}=
x_1^{-N} \beta_i^{t^N}.  
\]
By considering a sequence of words in $\beta_i$ tending to $B$ we see that
\[
B x_1^{-N} = x_1^{-N} B^{t^N}, 
\]
for every element $B$ in the pro-$\ell$ completion of $R_0$. 

This means that 
\[
w x_i^{N} w^{-1} x_1^{-N} 
= B (x_1^{a_1}\cdots x_{s-1}^{a_{s-1}}) x_i^N 
 (x_{s-1}^{-a_{s-1}} \cdots x_1^{-a_1}) B^{-1} x_1^{-N}
\]
\[
{\scriptstyle
=B 
\left(
x_1^{a_1}\cdots x_{s-1}^{a_{s-1}}
\right) x_i^N x_1^{-N}
\left(
x_{s-1}^{-a_{s-1}} 
\beta_{s-1}^{(1-t^N)
\frac{t^{
a_{s-1}}-1
}
{t-1}
}
\cdots
x_{2}^{-a_2}
\beta_{2}^{(1-t^N)
\frac{
t^{a_{2}}-1
}
{t-1}
}
x_1^{-a_1}
\right)
B^{-t^N} 
}
\]
\[=
B
\beta_i^{t^{a_1+\cdots a_{s-1}}
\frac{t^{
N}-1
}
{t-1}
}
\beta_{s-1}^{(1-t^N)t^{a_1+\cdots +a_{s-2}}
\frac{t^{
a_{s-1}}-1
}
{t-1}
}
\cdots
\beta_2^{(1-t^N) t^{a_1}
\frac{t^{
a_2}-1
}
{t-1}
}
B^{-t^N}.
\]
The above in $R_0/R_0'$ evaluates to 
\begin{equation}
{\scriptstyle
\label{congAbsQ}
w x_i^N w^{-1} x_1^{-N}=
\beta_i^{t^{a_1+\cdots+ a_{s-1}}
\frac{t^{
N}-1
}
{t-1}
}
\beta_{s-1}^{(1-t^N)t^{a_1+\cdots +a_{s-2}}
\frac{t^{
a_{s-1}}-1
}
{t-1}
}
\!\!\!\!\!\!\cdots
\beta_2^{(1-t^N)t^{a_1}
\frac{t^{
a_2}-1
}
{t-1}
}
B^{-t^N+1}.
}
\end{equation}

\begin{theorem}
\label{MatBurau}
For  $\sigma \in \mathrm{Gal}(\bar{\Q}/\Q)$ and $1\leq i \leq s-1$ we have that $\sigma(x_i)=w_i(\sigma) x_i^{N(\sigma)} w_i(\sigma)^{-1}$, where $N(\sigma)$ is the cyclotomic character $N:\mathrm{Gal}(\bar{\Q}/\Q)\rightarrow \Z_\ell^*$. 
Consider the multiplicative group $\mathbf{Z}_\ell$ which is isomorphic to $\Z_\ell$ and has 
topological generator $t$ given by
$
\mathbf{Z}_\ell\cong 
\langle
t^\alpha, \alpha\in \Z_\ell
\rangle.
$ 
Let us write  
\[
w_i(\sigma)= B_i(\sigma)x_1^{a_{1,i}(\sigma)}\cdots x_{s-1}^{a_{s-1,i}(\sigma)}, \qquad a_{\nu,i}(\sigma)\in \Z_\ell,
\]
where $B_i(\sigma) \in R_0/R_0'$ is expressed as
\[
B_i(\sigma)=
\beta_2^{b_{2,i}(\sigma)}
\cdots
\beta_{s-1}^{b_{s-1,i}(\sigma)}C,
\] 
with $b_{i,j}(\sigma) \in \Z_\ell$ and $C\in R_0'$. 
The matrix representation of $\rho_{\mathrm{Burau}}$ with respect to the basis $\beta_j=x_jx_1^{-1}$, $j=2,\ldots,s-1$ has the following form:
\[
\rho_{\mathrm{Burau}}(\sigma)=
\frac{t^{N(\sigma)}- 1}{t-1}
 L(\sigma)
+
\big(
1-t^{N(\sigma)}
\big) M(\sigma)+
\big(
1-t^{N(\sigma)}
\big) K(\sigma),
\]
where 
$L,M,K$ are $(s-2)\times(s-2)$ matrices given by
\[
L(\sigma)=
\mathrm{diag}
\left(
t^{\sum_{\nu=1}^{s-1} a_{\nu,2}(\sigma)},
\ldots
,
t^{\sum_{\nu=1}^{s-1} a_{\nu,s-2}(\sigma)}
\right)
\]
\[
M(\sigma)\!\!= \!\!{\scriptscriptstyle
\begin{pmatrix}
\Gamma(a_{2,2}) \cdot
t^{a_{1,2}(\sigma)}  
& \cdots &
\Gamma(a_{s,s-1}) \cdot
t^{a_{1,s-1}(\sigma)} \\
\Gamma(a_{3,2}) \cdot
t^{a_{1,2}(\sigma)+a_{2,2}(\sigma)}  
& \cdots &
\Gamma(a_{3,s-1}) \cdot
t^{a_{1,s-1}(\sigma)+a_{2,3}(\sigma)} \\
\vdots  
&  & \vdots \\
\Gamma(a_{s-2,2}) \cdot
t^{a_{1,2}(\sigma)
+\cdots+a_{s-1,2}(\sigma)} 
 & \cdots &
 \Gamma(a_{s-2,s-1}) \cdot
t^{a_{1,s-1}(\sigma)
+\cdots+a_{s-1,s-1}(\sigma)}
\end{pmatrix}
}
\]
\[
K(\sigma)=
\begin{pmatrix}
b_{2,2}(\sigma) & b_{2,3}(\sigma)  & \cdots & b_{2,s-1}(\sigma) \\
b_{3,2}(\sigma) & b_{3,3}(\sigma)  & \cdots & b_{3,s-1}(\sigma) \\
\vdots & \vdots &  & \vdots \\
b_{s-1,2}(\sigma) & b_{s-1,3}(\sigma)  & \cdots & b_{s-1,s-1}(\sigma)
\end{pmatrix}.
\]
In the above theorem the term
 \[
\Gamma(a):=(t^a-1)/(t-1)\]
 for $a\in \Z_\ell$,
 is defined in lemma \ref{existLimit}.
\end{theorem}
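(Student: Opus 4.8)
The plan is to read the matrix of $\rho_{\mathrm{Burau}}(\sigma)$ directly off identity \eqref{congAbsQ}, one basis vector $\beta_i$ at a time. First I would record that, after the normalisation $w_1(\sigma)=1$, equation \eqref{actGeneratos} forces $\sigma(x_1)=x_1^{N(\sigma)}$, so that for $2\le i\le s-1$
\[
\sigma(\beta_i)=\sigma(x_ix_1^{-1})=\sigma(x_i)\,\sigma(x_1)^{-1}=w_i(\sigma)\,x_i^{N(\sigma)}\,w_i(\sigma)^{-1}\,x_1^{-N(\sigma)}.
\]
This is precisely the left-hand side of \eqref{congAbsQ} with $w=w_i(\sigma)$ and $N=N(\sigma)$; here one uses the unique normal form $w_i(\sigma)=B_i(\sigma)\,x_1^{a_{1,i}(\sigma)}\cdots x_{s-1}^{a_{s-1,i}(\sigma)}$ recalled just before the statement (with $a_{\nu,i}(\sigma)\in\Z_\ell$ and $B_i(\sigma)$ in the pro-$\ell$ completion of $R_0$), under which the generic exponents $a_\nu$ and the generic element $B$ of \eqref{congAbsQ} specialise to $a_{\nu,i}(\sigma)$ and $B_i(\sigma)$. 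The $\Z_\ell$-valued exponents of $t$ that appear — in particular the expressions $\Gamma(a)=(t^a-1)/(t-1)$ — are genuine elements of $\Z_\ell[[\mathbf{Z}_\ell]]$ by Lemma~\ref{existLimit}, and the commutation steps behind \eqref{congAbsQ} are Lemmas~\ref{writeInv} and \ref{pass-over}; so this first step only imports results already in hand.

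Next I would pass to the abelianisation $R_0^{\mathrm{ab}}$, which (after completion, as discussed preceding the statement) is the free $\Z_\ell[[\mathbf{Z}_\ell]]$-module on $\beta_2,\dots,\beta_{s-1}$ carrying $\rho_{\mathrm{Burau}}$; there the factor $C\in R_0'$ of $B_i(\sigma)$ drops out, so writing $B_i(\sigma)=\sum_{j=2}^{s-1}b_{j,i}(\sigma)\,\beta_j$, identity \eqref{congAbsQ} becomes the additive relation
\[
\sigma(\beta_i)=t^{\sum_{\nu=1}^{s-1}a_{\nu,i}(\sigma)}\,\frac{t^{N(\sigma)}-1}{t-1}\,\beta_i+\bigl(1-t^{N(\sigma)}\bigr)\sum_{j=2}^{s-1}t^{\,a_{1,i}(\sigma)+\cdots+a_{j-1,i}(\sigma)}\,\Gamma\bigl(a_{j,i}(\sigma)\bigr)\,\beta_j+\bigl(1-t^{N(\sigma)}\bigr)\sum_{j=2}^{s-1}b_{j,i}(\sigma)\,\beta_j.
\]
Recording the coordinates of $\sigma(\beta_i)$ as the $i$-th column of $\rho_{\mathrm{Burau}}(\sigma)$, the first summand contributes exactly the diagonal matrix $\tfrac{t^{N(\sigma)}-1}{t-1}L(\sigma)$ (the leading $\beta_i$ is attached to index $i$ irrespective of $j$), the second contributes $\bigl(1-t^{N(\sigma)}\bigr)M(\sigma)$ with $M(\sigma)_{j,i}=\Gamma(a_{j,i}(\sigma))\,t^{\,a_{1,i}(\sigma)+\cdots+a_{j-1,i}(\sigma)}$, and the third contributes $\bigl(1-t^{N(\sigma)}\bigr)K(\sigma)$ with $K(\sigma)_{j,i}=b_{j,i}(\sigma)$. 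Summing the three yields the asserted identity
\[
\rho_{\mathrm{Burau}}(\sigma)=\frac{t^{N(\sigma)}-1}{t-1}\,L(\sigma)+\bigl(1-t^{N(\sigma)}\bigr)M(\sigma)+\bigl(1-t^{N(\sigma)}\bigr)K(\sigma).
\]

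Conceptually nothing new happens here: all of the work — the relations $\beta_i x_1^{-N}=x_1^{-N}\beta_i^{t^N}$ and $B x_1^{-N}=x_1^{-N}B^{t^N}$, and the resulting closed form \eqref{congAbsQ} — is already done, and what remains is bookkeeping. The point I would be most careful about is keeping the index data straight: the diagonal term $L$ carries the \emph{full} exponent sum $\sum_{\nu=1}^{s-1}a_{\nu,i}(\sigma)$, whereas the entries of $M$ carry only the \emph{partial} sums $a_{1,i}(\sigma)+\cdots+a_{j-1,i}(\sigma)$; moreover $M$ and $K$ are full $(s-2)\times(s-2)$ matrices — their diagonal entries (the cases $j=i$) are in general nonzero — so only $L$ is strictly diagonal, and one must resist lumping the $j=i$ contributions of $M$ and $K$ into $L$. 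A closing sanity check is that every entry indeed lies in $\Z_\ell[[\mathbf{Z}_\ell]]$, which follows from Lemma~\ref{existLimit} together with the description $\mathbf{Z}_\ell=\langle t^\alpha:\alpha\in\Z_\ell\rangle$.
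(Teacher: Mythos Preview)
Your proposal is correct and follows essentially the same route as the paper: you normalise so that $\sigma(x_1)=x_1^{N(\sigma)}$, compute $\sigma(\beta_i)=w_i(\sigma)x_i^{N(\sigma)}w_i(\sigma)^{-1}x_1^{-N(\sigma)}$, and then read the three matrix summands $L$, $M$, $K$ directly off the three factors of equation~\eqref{congAbsQ}. The paper's proof does exactly this, only more tersely; your extra care with the column/row indexing and the distinction between full and partial exponent sums is a welcome elaboration but not a new idea.
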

\begin{proof}
We will find the matrix $\rho$ corresponding  to the action given by 
$\sigma(x_i)=w_i(\sigma) x_i^{N(\sigma)} w_i(\sigma)^{-1}$.
Let us write each $w_i(\sigma)$
as 
\[
w_i(\sigma)= B_i(\sigma)x_1^{a_{1,i}(\sigma)}\cdots x_{s-1}^{a_{s-1,i}(\sigma)}, 
\]
where $B_i(\sigma) \in R_0/R_0'$ is expressed as
\[
B_i(\sigma)=
\beta_2^{b_{2,i}(\sigma)}
\cdots
\beta_{s-1}^{b_{s-1,i}(\sigma)}C,
\] 
with $b_{i,j}(\sigma) \in \Z_\ell[[\mathbf{Z}_\ell]]$ and $C\in R_0'$. 

Let us now consider the action of $\sigma$ on $\beta_i$ for $i=2,\ldots,s-1$ and recall that just after eq. (\ref{actGeneratos}) we have selected a  normalization by an inner automorphism $w_1(\sigma)=1$, 
so that $\sigma(x_1)=x_1^{N(\sigma)}$.
Therefore
\[
\sigma(\beta_i)=\sigma(x_i x_1^{-1})=
 w_i(\sigma) x_i^{N(\sigma)} w_i(\sigma)^{-1} x_1^{-N(\sigma)}.
\]
The matrix form of $\rho_{\mathrm{Burau}}$ as given in theorem \ref{MatBurau} follows by eq. (\ref{congAbsQ}).
More preciselly the 
 matrix $L(\sigma)$ comes from the coefficients of the factor $\beta_i^{a_1+\cdots+a_{s-1}}$, the matrix $M(\sigma)$ comes from the next factor
 \[\beta_{s-1}^{(1-t^N)t^{a_1+\cdots+a_{s-2}} 
(t^{a_{s-1}}-1)/(t-1) 
 }\cdots
 \beta_2^{(1-t^N)t^{a_1} 
(t^{a_{2}}-1)/(t-1)
 } \]
  and the matrix $K(\sigma)$ comes from the final factor $B^{-t^N+1}$.
\end{proof}

\section{Examples - Complete curves}
\label{sec:applications-cyc-cov}

\subsection{The compactification of cyclic covers}
Every topological cover of the Riemann surface $\mathbb{P}^1\backslash \{P_1,\ldots,P_{s}\}$ gives rise to a Riemann surface $X^0$, which can compactified to a compact Riemann surface $X$, see \cite[prop. 19.9]{MR1343250}. Moreover if the topological cover is Galois with Galois group $G$, then the corresponding function field $\mathbb{C}(X)/\mathbb{C}(x)$ form a Galois extension with the same Galois group. We  know that every Kummer extension of the rational function field, totally ramified above $s$ points, 
corresponds to the cyclic cover of the projective line given by:
\begin{equation} \label{cyccov}
y^n=\prod_{i=1}^{s} (x-b_i)^{d_i}, \qquad (d_i,n)=1.
\end{equation}
For different choices of exponents $d_1,\ldots,d_s$ the curves are in general not isomorphic, see \cite{Kallel-Sjerve}. 
Without loss of generality we can assume that the infinity point of this model is not ramified and this is equivalent to the condition 
 $\sum\limits_{i=1}^s d_i\equiv 0 \mod n$,  see \cite[p. 667]{Ko:99}. This means the ramified points $\{P_1,\ldots,P_{s-1},P_s=\infty\}$ in our original setting are now mapped to the points $\{b_1,\ldots,b_s\}$. 

Conversely, the cover given in eq. (\ref{cyccov})
determines equivalently a cyclic Kummer extension of the rational function field $\mathbb{C}(x)$ and since the exponents $d_i$ are prime to $n$ we have that the points $P_1,\ldots,P_{s}$ are all fully ramified see \cite{Ko:99}. Therefore, the open curve obtained by removing the $s$ points $Q_1,\ldots,Q_s$ which map onto $P_1,\ldots,P_s$ is a topological cyclic cover, which can be considered with the tools developed so far. 

However, we  will show that the assumption made so far in this article lead to the selection $d_i=1$ for all $1\leq i \leq s-1$. 
Let $Q_i$ be the unique point of $X$ above $b_i$ and let $t_i$ be a local uniformizer at $Q_i$. 
We can select  $t_i$ so that 
$x-b_i=t_i^n$. Indeed, valuation of $x-b_i$ in the local ring at $Q_i$ is $n$ and by Hensel's lemma any unit is an $n$-power that can be absorbed by reselecting the uniformizer $t_i$ if necessary.
We can replace the factor $(x-b_i)^{d_i}$ in the original defining equation (\ref{cyccov}) of the curve in order to arrive at the following equation
\begin{equation}
\label{eq1d}
y^n= t_i^{nd_i} U, \qquad 
U=
\prod_{\substack{\nu=1 \\ \nu \neq i}}^{s} (x-b_\nu)^{d_\nu}
\in k[x], v_{Q_i}(U)=0.
\end{equation}
The element $U$ is invariant under the action of $\langle \sigma \rangle$ and so is its $n$-th root $u\in k[[t_i]]$. Indeed, since $\sigma(u^n)=\sigma(U)=u^n$ we have that $\sigma(u)=\zeta^a u$, for some $a$, $0\leq a <n$. But $u$ is a unit, therefore $u \equiv a_0 \mod t_i k[[t_i]]$, for some element $a_0 \in k$, $a_0\neq 0$. Also $\sigma(a_0)=a_0$, so by considering  $\sigma(u)=\zeta^a u$ modulo $t_i k[[t_i]]$ we obtain  $a_0= \zeta^a a_0 $. This implies that  $a=0$ and $u$ is a $\sigma$-invariant element.

Since $x-b_i=t_i^n$
the generator $\sigma$ of $\mathrm{Gal}(X/\mathbb{P}^1)$ acts on $t_i$ by sending $\sigma(t_i)=\zeta^{\ell} t_i$ for some $\ell \in \mathbb{N}$. 
This $\ell$ equals $d_i^*$ for some $0< d_i^* < n$, where 
  $d_i d_i^*\equiv 1 \mod n$. 
  Indeed, 
by taking the $n$-root in eq. (\ref{eq1d}), we have 
$
y=t_i^{d_i} u
$
for some $\sigma$-invariant unit in $k[[t_i]]$. Then the action of $\sigma$ gives us that 
$\zeta=\zeta^{\ell d_i}$, so $\ell d_i \equiv 1 \mod n$.
So in the short exact sequence 
\[
1 \rightarrow 
R_n 
\rightarrow 
F_{s-1}
\rightarrow
\Z/n\Z
\rightarrow 
1
\]
the elements $x_i$, which correspond to loops winding once around each branch point, map to the element $\sigma^{d_i^*} \in \Z/n\Z$. This is not compatible  
with the selection of the winding number function $\alpha$ given in equation (\ref{w-map}) unless all $d_i$ are equal. Without loss of generality we can assume that $d_i=1$ for all $1\leq i \leq s-1$.

Riemann-Hurwitz theorem implies that
\begin{equation} \label{genusCyclic}
g=\frac{(n-1)(s-2)}{2},
\end{equation}
which is compatible with the computation of $r=2g+s-1$ given in eq. (\ref{ASRn}).

This curve can be uniformized as a quotient $\mathbb{H}/\Gamma$ of the hyperbolic space modulo a discrete free subgroup of genus $g$, which admits a presentation
\[
\Gamma=\langle a_1,b_1,a_2,b_2,\ldots,a_g,b_g | [a_1,b_1][a_2,b_2]\cdots [a_g,b_g]=1 \rangle.
\]
On the other hand side, when we remove the $s$ branch points we obtain a topological cover of the space $X_s$ defined in the previous section.
This topological cover corresponds to the free subgroup  $R_n<F_{s-1}$ given by
\[
R_n=\langle  a_1,b_1,a_2,b_2,\ldots,a_g,b_g,\gamma_1,\ldots,\gamma_s | \gamma_1 \gamma_2 \cdots \gamma_s \cdot[a_1,b_1]\cdots [a_g,b_g]=1\rangle.
\]

The group $\mathrm{Gal}(X/\mathbb{P}^1)\cong \Z/n\Z=\langle \sigma \rangle$ is a subgroup of the automorphism group $\mathrm{Aut}(X)\subset \mathrm{Mod}(X)$. Therefore the generator  $\sigma$ acts on $R_n$. 

Since by lemma \ref{lemma:funRn} the group $R_n$ is the fundamental group of $Y_n$ the space $R_n/R_n'$ is the first homology group of the open curve $Y_n$. By proposition \ref{prop:GalModRn} its structure is given by $H_1(Y_n,\Z)=\Z[\Z/n\Z]^{s-1} \bigoplus \Z$. 

Let $\widehat{R}_n,\widehat{R_n'}$ be the pro-$\ell$ completions of $R_n$ and $R_n'$ respectively. Since the quotient $R_n/R_n'$ is torsion free, the completion functor is exact, see \cite[p. 35 exer. 21,22]{DDMS} and \cite[p. 81-85]{RibesZalesskii}.
This allows us to see that 
\[
\widehat{R}_n /\widehat{R_n'}=\widehat{H_1(Y_n,\Z)}=H_1(Y_n,\Z_\ell). 
\]

\begin{lemma}
\label{invariantEll}
With notation as above, the $\mathrm{Gal}(X/\mathbb{P}^1)$-invariant elements of $H_1(Y_n,\Z)$ (resp. $H_1(Y_n,\Z_\ell)$) is the group generated by the elements
\[
\{x_i^n : 1\leq i \leq s-1\}.
\]
\end{lemma}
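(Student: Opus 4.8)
The plan is to reduce the whole statement to the explicit $\Z[\Z/n\Z]$-module description of $R_n/R_n'$ already obtained in Proposition \ref{prop:GalModRn}, and then to compute the invariants of a free module over the group ring.

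First I would recall from the proof of Proposition \ref{prop:GalModRn} that, with $\beta_j=x_jx_1^{-1}$ for $2\le j\le s-1$, one has the $\Z[\Z/n\Z]$-module decomposition
\[
H_1(Y_n,\Z)=R_n/R_n'=\bigoplus_{j=2}^{s-1}\Z[\Z/n\Z]\,\beta_j\ \oplus\ \Z\,x_1^n ,
\]
where the generator $\sigma$ of $\Z/n\Z=\mathrm{Gal}(X/\mathbb{P}^1)$ acts by conjugation by $x_1$. In particular $\sigma$ fixes $x_1^n$, and on the $j$-th summand it sends $\sigma^i\beta_j=x_1^ix_jx_1^{-i-1}$ to $\sigma^{i+1}\beta_j$, i.e. it is just multiplication by $\sigma$ in $\Z[\Z/n\Z]$. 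Thus the task becomes: determine the $\sigma$-invariants of this module.

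Next I would do the (standard) invariant computation: in the regular module $\Z[\Z/n\Z]$ an element $\sum_i c_i\sigma^i$ is fixed by multiplication by $\sigma$ exactly when all $c_i$ are equal, so the invariant submodule is the rank-one free $\Z$-module generated by the norm element $N=\sum_{i=0}^{n-1}\sigma^i$. Hence $(R_n/R_n')^{\sigma}=\bigoplus_{j=2}^{s-1}\Z\,N\beta_j\ \oplus\ \Z\,x_1^n$, a free $\Z$-module of rank $s-1$. It then remains to identify these generators with the classes of the $x_i^n$. Here I would invoke Lemma \ref{writeInv}, or simply the telescoping identity $\prod_{i=0}^{n-1}x_1^ix_jx_1^{-i-1}=x_j^nx_1^{-n}$, which already holds in $F_{s-1}$; since $\beta_j^{\sigma^i}=x_1^ix_jx_1^{-i-1}$, passing to the abelian group $R_n/R_n'$ turns the left side into $N\beta_j$ and the right side into $\overline{x_j^n}-\overline{x_1^n}$. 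Therefore $(R_n/R_n')^{\sigma}$ has $\Z$-basis $\{x_j^nx_1^{-n}:2\le j\le s-1\}\cup\{x_1^n\}$; since $\overline{x_j^n}=\overline{x_j^nx_1^{-n}}+\overline{x_1^n}$ and conversely, this subgroup coincides with the one generated by $\{x_i^n:1\le i\le s-1\}$, and these $s-1$ elements are a $\Z$-basis (consistent with the rank). For the $\Z_\ell$-version I would use that $(-)^{\sigma}=\ker(1-\sigma)$ commutes with the flat base change $-\otimes_\Z\Z_\ell$, together with the identification $\widehat{R}_n/\widehat{R_n'}=H_1(Y_n,\Z)\otimes_\Z\Z_\ell$ noted just above the lemma.

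The point requiring most care is the passage from the group-theoretic telescoping identity to the module statement: one must be sure that in $R_n/R_n'$ the operator $t=\sigma$ genuinely has order $n$ — which holds because inner automorphisms act trivially on an abelianization, so the conjugation action of $F_{s-1}$ on $R_n/R_n'$ factors through $F_{s-1}/R_n\cong\Z/n\Z$ — and that $\prod_{i=0}^{n-1}\beta_j^{\sigma^i}$ becomes exactly the norm $N\beta_j$ rather than some other $\Z[\Z/n\Z]$-combination. Everything else is bookkeeping with the bases of Proposition \ref{prop:GalModRn} and Lemma \ref{Rngenerators}.
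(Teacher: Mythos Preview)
Your proof is correct and follows essentially the same route as the paper: both reduce to the decomposition of Proposition~\ref{prop:GalModRn}, compute the $\sigma$-invariants of the regular module $\Z[\Z/n\Z]$ as the multiples of the norm element, and then identify $N\beta_j$ with $x_j^n x_1^{-n}$ via the telescoping identity (the paper's Lemma~\ref{writeInv}). Your write-up is in fact more explicit than the paper's---you spell out the change of basis from $\{x_j^nx_1^{-n},\,x_1^n\}$ to $\{x_i^n\}$ and justify the $\Z_\ell$-case by flat base change, both of which the paper leaves implicit.
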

\begin{proof}
We will use the decomposition of proposition \ref{prop:GalModRn} for $H_1(Y_n,\Z)$ and the corresponding decomposition of $H_1(Y_n,\Z_\ell)=H_1(Y_n,\Z)\otimes_\Z \Z_\ell$.
Observe that an element in the group algebra $\mathbb{Z}[\langle \sigma \rangle]$ is $\sigma$-invariant if and only if it is of the form
$
\sum_{i=0}^{n-1}a\sigma^i$ for some $a\in \Z$.
Hence the invariant elements are multiples (powers in the multiplicative notation) by
\[
\beta_j \beta_j^\sigma \beta_j^{\sigma^2} \cdots \beta_j^{\sigma^{n-1}}=x_j^nx_1^{-n}.
\]
The action of 
$\langle \sigma\rangle=\mathrm{Gal}(X/\mathbb{P}^1)$ is 
given by conjugation with $x_1$, therefore $x_1^n$ is invariant under this conjugation action and the result follows.
\end{proof}

The elements $\gamma_i$ are lifts of the loops $x_i$ around each hole in the projective line. Thus $\gamma_i$ are $\mathbb{Z}/n\mathbb{Z}$-invariant.
Set $\gamma_i=x_i^{n}$.
The quotient
 $\mathbb{Z}[\mathbb{Z}/n\mathbb{Z}]/ \langle \sum_{i=0}^{n-1} \sigma ^i\rangle$ is the co-augmentation module, see  \cite[sec. 1]{NeukirchBonn}.

\begin{lemma}
We have
\[
x_k^n x_i x_k^{-n} x_1^{-1}= \beta_k \cdot \beta_k^\sigma  \cdot \beta_k^{\sigma^2} \cdots \beta_k^{\sigma ^{n-1}}
\cdot
\beta_i^{\sigma ^n}
\cdot
\beta_k^{-\sigma ^n}
\cdot
\beta_k^{-\sigma ^{n-1}}
\cdots
\beta_k^{-\sigma ^2}
\cdot
\beta_k^{-\sigma }
\]
Moreover in the abelian group $R/R'$ we have
\[
x_k^n x_i x_k^{-n} x_1^{-1}=\beta_i^{\sigma ^n} \beta_k^{1-\sigma ^n}.
\]
\end{lemma}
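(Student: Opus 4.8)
The plan is to establish the first identity by a direct computation in the free group $F_{s-1}$, using repeatedly the relation $\beta_k = x_k x_1^{-1}$ together with the fact that conjugation by $x_1$ implements the action of the generator $\sigma$ of $\Z/n\Z$ (this is exactly the description of the $\Z/n\Z$-action recorded in Proposition \ref{prop:GalModRn} and in Lemma \ref{invariantEll}). First I would rewrite $x_k^n x_1^{-n}$ using Lemma \ref{writeInv} in its integer form, equation (\ref{n-rel}), which gives
\[
x_k^n x_1^{-n} = \beta_k \cdot \beta_k^{t} \cdots \beta_k^{t^{n-1}} = \beta_k \cdot \beta_k^{\sigma} \cdots \beta_k^{\sigma^{n-1}},
\]
the last equality being the translation from the Burau parameter $t$ to the group-algebra generator $\sigma$, valid once we pass to the finite cover $Y_n$ where $t^n$ acts trivially modulo the relevant relations. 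Then I would insert this into $x_k^n x_i x_k^{-n} x_1^{-1}$ by writing
\[
x_k^n x_i x_k^{-n} x_1^{-1} = (x_k^n x_1^{-n}) \cdot x_1^n x_i x_1^{-n} \cdot (x_k^n x_1^{-n})^{-1} \cdot (x_1^n x_1^{-1}) \cdot \text{(correction)},
\]
more carefully: $x_k^n x_i x_k^{-n} x_1^{-1} = (x_k^n x_1^{-n})\,(x_1^n x_i x_1^{-1} x_1^{-(n-1)})\,(x_1^{n-1} x_k^{-n})$, and then recognize $x_1^n x_i x_1^{-n} = \beta_i^{\sigma^n}$ as a conjugate of $\beta_i$ and $(x_k^n x_1^{-n})^{-1} = \beta_k^{-\sigma^{n-1}} \cdots \beta_k^{-\sigma}\,\beta_k^{-1}$. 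Bookkeeping the powers of $x_1$ carefully — in particular tracking that the leftover $x_1$-powers cancel because $x_1^n x_i x_1^{-n}$ contributes the $\sigma^n$-twist — should yield exactly the stated word
\[
\beta_k \cdot \beta_k^\sigma \cdots \beta_k^{\sigma^{n-1}} \cdot \beta_i^{\sigma^n} \cdot \beta_k^{-\sigma^n} \cdot \beta_k^{-\sigma^{n-1}} \cdots \beta_k^{-\sigma}.
\]

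For the second identity, I would pass to the abelianization $R/R'$, where all the $\beta$'s commute. There the telescoping product $\beta_k \cdot \beta_k^\sigma \cdots \beta_k^{\sigma^{n-1}}$ and its partial inverse $\beta_k^{-\sigma} \cdots \beta_k^{-\sigma^{n-1}}$ combine: additively (in multiplicative notation, by collecting exponents) the total exponent of $\beta_k$ becomes
\[
(1 + \sigma + \cdots + \sigma^{n-1}) - (\sigma + \sigma^2 + \cdots + \sigma^{n-1}) - \sigma^n = 1 - \sigma^n,
\]
so that $x_k^n x_i x_k^{-n} x_1^{-1} = \beta_i^{\sigma^n}\,\beta_k^{1-\sigma^n}$ in $R/R'$. (Note $\sigma^n$ is not the identity here because we are working in $R/R'$ rather than $R_n/R_n'$; in the latter one would further reduce $\sigma^n \equiv 1$.)

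The main obstacle I anticipate is purely organizational: getting the placement of the $x_1$-powers exactly right in the non-abelian computation, since each conjugation by a power of $x_1$ shifts every $\beta$ by the corresponding power of $\sigma$, and an off-by-one error in the exponent of $x_1$ (arising from the single trailing $x_1^{-1}$ rather than $x_1^{-n}$) would propagate into every $\sigma$-exponent. The cleanest way to avoid this is to systematically "push all powers of $x_1$ to the right" using the relation $\beta_i\, x_1^{-N} = x_1^{-N} \beta_i^{t^N}$ from Lemma \ref{pass-over} (with $N$ an integer), exactly as in the lemmas preceding Theorem \ref{MatBurau}, and only at the end read off the exponents; the abelianized statement is then immediate from the non-abelian one by cancellation.
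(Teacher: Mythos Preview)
Your overall strategy is exactly the paper's: insert cancelling powers of $x_1$, apply Lemma~\ref{writeInv} to recognize $x_k^n x_1^{-n}$ as $\beta_k\beta_k^\sigma\cdots\beta_k^{\sigma^{n-1}}$, identify the middle factor as a $\sigma$-conjugate of $\beta_i$, and then abelianize. The second (abelianized) statement is handled correctly.

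However, the explicit decomposition you wrote down is off, and in precisely the way you feared. Your ``more carefully'' factorization
\[
(x_k^n x_1^{-n})\,(x_1^n x_i x_1^{-1} x_1^{-(n-1)})\,(x_1^{n-1} x_k^{-n})
\]
multiplies out to $x_k^n x_i x_1^{-1} x_k^{-n}$, not to $x_k^n x_i x_k^{-n} x_1^{-1}$: the trailing $x_1^{-1}$ has been absorbed in the wrong place. Relatedly, the identification ``$x_1^n x_i x_1^{-n}=\beta_i^{\sigma^n}$'' is false; by definition $\beta_i^{\sigma^n}=x_1^n\beta_i x_1^{-n}=x_1^n x_i x_1^{-n-1}$. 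The paper fixes both issues at once with the factorization
\[
x_k^n x_i x_k^{-n} x_1^{-1}
=(x_k^n x_1^{-n})\cdot(x_1^n\beta_i x_1^{-n})\cdot x_1\,(x_k^n x_1^{-n})^{-1}\,x_1^{-1},
\]
where the middle factor is genuinely $\beta_i^{\sigma^n}$ and the final conjugation by $x_1$ shifts each $\beta_k^{-\sigma^j}$ to $\beta_k^{-\sigma^{j+1}}$, producing exactly the string $\beta_k^{-\sigma^n}\cdots\beta_k^{-\sigma}$. Once you replace your decomposition with this one, the rest of your argument goes through verbatim.
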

\begin{proof}
Write
\begin{eqnarray*}
x_k^n x_i x_k^{-n} x_1^{-1} & = &
x_k^n  x_1^{-n} \cdot x_1^{n}
x_i
x_1^{-1}
x_1^{-n}
x_1^{n+1}
x_k^{-n}
x_1^{-1} \\
& = &
\beta_k \cdot \beta_k^\sigma  \cdot \beta_k^{\sigma ^2} \cdots \beta_k^{\sigma ^{n-1}}
\cdot
x_1^{n} \beta_i x_1^{-n}
x_1
\left(
\beta_k \cdot \beta_k^\sigma  \cdot \beta_k^{\sigma ^2} \cdots \beta_k^{\sigma ^{n-1}}
\right)^{-1} x_1^{-1}
\\
& =&
\beta_k \cdot \beta_k^\sigma  \cdot \beta_k^{\sigma ^2} \cdots \beta_k^{\sigma ^{n-1}}
\cdot
\beta_i^{\sigma ^n}
\cdot
\beta_k^{-\sigma ^n}
\cdot
\beta_k^{-\sigma ^{n-1}}
\cdots
\beta_k^{-\sigma ^2}
\cdot
\beta_k^{-\sigma }
\end{eqnarray*}
\end{proof}

\begin{lemma}
The subgroup of $H_1(Y_n,\Z)=R_n/R_n'$   generated by the following two sets of $\mathbb{Z}/n\mathbb{Z}$-invariant elements 
 \[\{x_1^n, x_j^n x_1^{-n}: 2 \leq j \leq s-1\},
\{
x_j^n : 1 \leq j \leq s-1
\}
 \]
 is invariant under the action of the braid group.

The subgroup of $H_1(Y_n,\Z_\ell)$ generated by the same elements is invariant under the braid group 
  and under the action of the group $\mathrm{Gal}(\bar{\Q}/\Q)$.
\end{lemma}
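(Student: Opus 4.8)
The two displayed generating sets span one and the same subgroup $V\subseteq H_1(Y_n,\Z)$: in the abelian group $R_n/R_n'$ we have $x_j^n=(x_j^nx_1^{-n})\,x_1^n$ and $x_j^nx_1^{-n}=x_j^n\,(x_1^n)^{-1}$, so each set lies in the span of the other; by Lemma \ref{invariantEll} this $V$ is precisely the subgroup of $\Z/n\Z$-invariants of $H_1(Y_n,\Z)$. The plan is to show that each standard braid generator carries the elements $x_j^n$ back into $V$.

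The single fact that makes everything work is: conjugation by any $x_i$, $1\le i\le s-1$, induces on $H_1(Y_n,\Z)=R_n/R_n'$ the action of the deck generator $\sigma$ of $\Z/n\Z=F_{s-1}/R_n$. Indeed $\alpha_n(x_i)$ is the fixed generator of $\Z/n\Z$ for every such $i$, and $\beta_i=x_ix_1^{-1}\in R_n$ acts trivially on the abelianization, so conjugation by $x_i$ equals conjugation by $x_1$. Since each $x_j^n$ is $\Z/n\Z$-fixed in $R_n/R_n'$ (Lemma \ref{invariantEll}), it follows that $x_i\,x_j^n\,x_i^{-1}=x_j^n$ in $H_1(Y_n,\Z)$ for all $i,j$. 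Now I apply the Artin formulas: for $1\le i\le s-2$, $\sigma_i$ fixes $x_j^n$ for $j\neq i,i+1$, sends $x_{i+1}^n\mapsto x_i^n$, and sends $x_i^n\mapsto x_ix_{i+1}^nx_i^{-1}$, which equals $x_{i+1}^n$ in homology by the previous line; hence $\sigma_i$ permutes the spanning set of $V$ and $\sigma_i(V)=V$. If one wants invariance under $B_s$, the extra generator $\sigma_{s-1}$ of eq.~(\ref{ss1})--(\ref{ss2}) is treated identically: it fixes $x_j^n$ for $j\le s-2$, sends $x_s^n\mapsto x_{s-1}^n$ and $x_{s-1}^n\mapsto x_{s-1}x_s^nx_{s-1}^{-1}$, where $x_s^n$, being the loop $\gamma_s$ around the point above $\infty$, equals $(x_1^n\cdots x_{s-1}^n)^{-1}$ in $R_n/R_n'$ by the relation $\gamma_1\cdots\gamma_s=1$, so it lies in $V$, is $\Z/n\Z$-invariant, and the conjugate again equals it. Tensoring with $\Z_\ell$, these $\Z_\ell$-linear identities yield the braid invariance of $V_\ell\subseteq H_1(Y_n,\Z_\ell)$.

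For the Galois statement (with $n=\ell^k$, so that $\widehat R_n\lhd\mathfrak F_{s-1}$ and $\mathrm{Gal}(\bar\Q/\Q)$ acts on $\widehat R_n/\widehat R_n'=H_1(Y_n,\Z_\ell)$) I would repeat the argument with the Ihara formula $\sigma(x_i)=w_i(\sigma)\,x_i^{N(\sigma)}\,w_i(\sigma)^{-1}$ of eq.~(\ref{actGeneratos}). Taking $n$-th powers, $\sigma(x_i^n)=w_i(\sigma)\,(x_i^n)^{N(\sigma)}\,w_i(\sigma)^{-1}$ inside $\widehat R_n$, where $(x_i^n)^{N(\sigma)}$ is legitimate since $x_i^n$ topologically generates a copy of $\Z_\ell$. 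In $\widehat R_n/\widehat R_n'$ the class of $(x_i^n)^{N(\sigma)}$ is $N(\sigma)\,[x_i^n]$, still $\Z/n\Z$-invariant (a $\Z_\ell$-multiple of an invariant element is invariant, by the $\Z_\ell$-version of Lemma \ref{invariantEll}); and conjugation by $w_i(\sigma)$ acts on $H_1(Y_n,\Z_\ell)$ only through $\alpha_n(w_i(\sigma))\in\Z/n\Z$, hence fixes that class. Thus $\sigma(x_i^n)=N(\sigma)\,[x_i^n]\in V_\ell$ for $1\le i\le s-1$, so $\sigma(V_\ell)\subseteq V_\ell$, and in fact $\sigma(V_\ell)=V_\ell$ since $N(\sigma)\in\Z_\ell^*$; one even sees that $\sigma$ acts on $V_\ell$ as multiplication by $N(\sigma)$, i.e.\ through the cyclotomic character.

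The only genuinely delicate point is reducing the conjugates $x_i x_j^n x_i^{-1}$ and $w_i(\sigma)(x_i^n)^{N(\sigma)}w_i(\sigma)^{-1}$ to honest elements of $V$, resp.\ $V_\ell$: this rests on identifying conjugation by a generator with the deck action together with the $\Z/n\Z$-invariance of $x_j^n$ from Lemma \ref{invariantEll}, and on the observation that the inner ambiguities coming from $w_i(\sigma)$ and from passing to the abelianization of $\widehat R_n$ are absorbed either by normality/abelianization or by this invariance. Everything else is bookkeeping with the Artin and Ihara formulas.
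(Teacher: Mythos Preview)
Your argument is correct and is in fact cleaner than the paper's own proof. The paper verifies braid invariance by explicit computation: it expands $\sigma_1(x_1^n)$ and $\sigma_j(x_j^nx_1^{-n})$ for $j\ge 2$ through the $\beta$-basis using Lemma~\ref{writeInv}, and then reduces step by step to see that the result lands back in the subgroup. You bypass all of this with the single structural observation (already noted in the remark after eq.~(\ref{Zaction})) that conjugation by any $x_i$ acts on $R_n/R_n'$ as the deck generator $\sigma$; combined with Lemma~\ref{invariantEll} this makes the Artin formula $\sigma_i(x_i^n)=x_ix_{i+1}^nx_i^{-1}$ collapse to $x_{i+1}^n$ immediately, and the braid group visibly permutes the set $\{x_j^n\}$.

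For the Galois part the two proofs converge in spirit but differ in packaging. The paper writes $\tau(x_i^nx_1^{-n})=\big(\tau(\beta_i)\big)^{1+\sigma+\cdots+\sigma^{n-1}}$ and invokes Lemma~\ref{invariantEll}; note that this equality silently uses $\tau(\beta_i^{\sigma^k})=(\tau(\beta_i))^{\sigma^{kN(\tau)}}$ together with the fact that multiplication by $N(\tau)\in\Z_\ell^*$ permutes $\Z/n\Z$ when $n=\ell^k$. Your route---reducing the conjugation by $w_i(\sigma)$ to the deck action via $\alpha_n$ and observing that invariants are fixed---is the same mechanism stated more transparently, and it yields the bonus that $\mathrm{Gal}(\bar\Q/\Q)$ acts on $V_\ell$ through the cyclotomic character, which the paper does not extract. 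Your treatment of the extra generator $\sigma_{s-1}$ is also a nice completeness check that the paper omits.
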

\begin{proof}
We consider first the braid action. The proof is the same in the discrete and in the pro-$\ell$ setting.  
By lemma \ref{writeInv} we have
\begin{align*}
\sigma_1 (x_1^n) & =  (x_1 x_2 x_1^{-1})^{n} 
=  x_1 \cdot x_2^{n} \cdot x_1^{-1} 
=  x_1 \cdot x_2^{n} x_1^{-n}\cdot x_1^{n-1} 
\\
&=  x_1 \cdot \beta_2 \cdot \beta_2^\sigma \cdot \beta_2^{\sigma^2} \cdots \beta_2^{\sigma^{n-1}} \cdot x_1^{-1} \cdot x_1^{n} 
 =  \beta_2^\sigma \cdot \beta_2^{\sigma^{2}} \cdots \beta_2^{\sigma^{n}} \cdot x_1^{n} 
 \\
&=  \beta_2 \cdot \beta_2^\sigma \cdots \beta_2^{\sigma^{n-1}} \cdot x_1^{n} 
 =  x_2^{n} x_1^{-n} \cdot x_1^{n}=x_2^n \\
\sigma_1 (x_2^n)  &=  x_1^{n},  \sigma_1 (x_i^n)  =  x_i^{n} \ (i>2).
\end{align*}
\begin{eqnarray*}
\text{For $j\geq 2$:  }
\sigma_j (x_j^n x_1^{-n}) & = & (x_j x_{j+1} x_j^{-1})^{n} x_1^{-n} 
 =  x_j \cdot x_{j+1}^{n} \cdot x_j^{-1} \cdot x_1^{-n}\\
& = & x_jx_1^{-1} \cdot x_1 (x_{j+1}^{n}x_1^{-n}) x_1^{-1} \cdot x_1^{n} \cdot x_1x_j^{-1}\cdot x_1^{-n} \\
&= & x_{j+1}^n x_1^{-n} \\
\sigma_j(x_j^n)&= & \sigma_j(x_j^{n} x_1^{-n}) \sigma_j(x_1^n)
=x_{j+1}^n.
\end{eqnarray*}

We will now consider the action of $\mathrm{Gal}(\bar{\Q}/\Q)$, which makes sense only in the pro-$\ell$ setting. Each element  $\tau\in 
\mathrm{Gal}(\bar{\Q}/\Q)$ acts on $x_i$ by 
\[
\tau(x_i)=w_i(\tau) x_i^{N(\tau)} w_i(\tau)^{-1}, 
\]

Therefore, for $i=2,\ldots,s-1$ we have
\begin{align*}
\tau(x_i^n x_1^{-n}) &= 
\tau(\beta_j \beta_j^\sigma \cdots \beta_j^{\sigma^{n-1}}) \\
&= 
\left(
 \tau(\beta_j)\right)^{
 1+\sigma+\cdots+\sigma^{n-1}
 }\end{align*}
which is an element invariant under the action of $\Z/n\Z=\langle \sigma \rangle$,  therefore it belongs to the desired group by lemma \ref{invariantEll}. We have assumed that we will normalize by an inner automorphism the element $\tau$ so that $\tau(x_1^n)=x_1^{N(\tau)n}$, that is $w_1(\tau)=1$.
\end{proof}

Consider now the space
\[
H_1(\bar{Y}_n,\mathbb{Z})=\frac{R_n}{R_n' \cdot \langle \gamma_1,\ldots,\gamma_s  \rangle} =
\frac{R_n}{R_n' \cdot \langle x_1^n,\ldots,x_s^n \rangle}.
\]
Observe that $R_n/R_n' \cdot \langle x_1 \rangle=\mathbb{Z}[\mathbb{Z}/n\mathbb{Z}]^{s-2}.$
Since $\langle \gamma_1,\ldots,\gamma_s\rangle$ is both $\mathbb{Z}/n\mathbb{Z}$ and $B_s$ stable we have a natural defined action of $B_s$ on the quotient.
We compute now the action of the braid group on $\beta_j^{\sigma^i}=x_1^{i} x_j x_1^{-i-1}$. We can pick as a basis of the $\mathbb{Z}$-module $H_1(\bar{Y}_n,\mathbb{Z})$ the elements
\[
\{
\beta_j^{\sigma^i}=x_1^i x_j x_1^{-1-i}: 2\leq j \leq s-1, 0\leq i \leq n-2
\}
\]
and equation (\ref{n-rel}) written additively implies that 
$\beta_j^{\sigma^{n-1}}=-\sum_{\nu=0}^{n-2} \beta_j^{\sigma^\nu}$, recall that all powers $x_i^n$ are considered to be zero.


Let $J_{\Z/n\Z}$ be the co-augmentation module. 
 Observe that
$\beta_j^{t^{\nu}-1}=[x_1^{\nu},x_j]$.
It is well known (see, \cite[Prop. 1.2]{NeukirchBonn}) that 
$
\mathbb{Z}[\Z/n\Z]=J_{\Z/n\Z} \oplus \Z. 
$
We have
\begin{equation}
\label{DSeq}
H_1(\bar{Y}_n,\mathbb{Z})=J_{\Z/n\Z}^{s-2}.
\end{equation}
Notice that the above $\Z$-module has the correct rank $2g=(n-1)(s-2)$. 
The direct sum in eq. (\ref{DSeq}) is in the category of $\Z$-modules not in the category of $B_s$-modules. Also on the co-augmentation module $J_{\Z/n\Z}$ the generator of the $\Z/n\Z$ is represented by the matrix:
\begin{equation} \label{augmentationMat}
A:=
\begin{pmatrix}
0 & \cdots & 0 & -1 \\
1   & \ddots & \vdots & \vdots   \\
0 & \ddots & 0  & -1 \\
0 & 0 & 1 & -1
\end{pmatrix}
\end{equation}
which is the companion matrix of the polynomial $x^{n-1}+\cdots+ x+1$. 
Notice that for $n=p$ prime we can represent
 $J_{\Z/n\Z}$ is in terms of the $\Z$-module $\Z[\zeta]$, where $\zeta$ is a primitive $p$-th root of unity, i.e.
\[
\Z[\zeta]=\bigoplus_{\nu=0}^{p-2} \zeta^{\nu} \Z,
\]
and the $\Z[\Z/n\Z]$-module structure is given by multiplication by $\zeta$. 

Since the $\Z/n\Z$-action and the braid action are commuting we have a decomposition (notice that $1$ does not appear in the eigenspace decomposition below)
\[
H_1(\bar{Y}_n,\mathbb{Z})\otimes_{\Z}\C =\bigoplus_{\nu=1}^{n-1} V_\nu
\]
where $V_\nu$ is the eigenspace of the $\zeta^\nu$-eigenvalue. Each $V_\nu$
is a $B_s$-module of dimension $s-2$.
In order to compute the spaces $V_\nu$ we have to diagonalize the matrix given in eq. (\ref{augmentationMat}). Consider the Vandermonde matrix given by:
\[
P=
\begin{pmatrix}
1 & \zeta_1 & \zeta_1^2 & \cdots & \zeta_1^{n-2} \\
1 & \zeta_2 & \zeta_2^2 & \cdots & \zeta_2^{n-2}  \\
\vdots & \vdots &  & \vdots \\
1 & \zeta_{n-1} & \zeta_{n-1}^2 & \cdots & \zeta_{n-1}^{n-2}
\end{pmatrix},
\]
where $\{\zeta_1,\ldots,\zeta_{n-1}\}$ are all $n$-th roots of unity different than $1$. Observe that 
\[
P\cdot A = \mathrm{diag}(\zeta_1,\zeta_2,\ldots,\zeta_{n-1}) \cdot P.
\]
Thus the action of the braid group on the eigenspace $V_\nu$ of the eigenvalue $\zeta^\nu$ can be computed by a base change as follows:
Consider the initial base 
$1,\beta_j, \beta_j^{t},\ldots,\beta_j^{t^{n-2}}$ for $2 \leq j \leq s-1$. The eigenspace of the $\zeta^\nu$ eigenvalue has as basis  the $k$-elements  of the $1\times (n-2)$ matrix 
\[
\left( 1,\beta_j, \beta_j^{\sigma},\ldots,\beta_j^{\sigma^{n-2}} \right) \cdot P^{-1}
\]
for all $j$ such that 
$2\leq j \leq s-1$. These elements are $\C$-linear combinations of the elements $\beta_j$ and the action of the braid generators on them can be easily computed. 

Since the action of $\mathrm{Gal}(\bar{Y}_n/\mathbb{P}^1)=\langle \sigma \rangle$ commutes with the action of $B_{s}$ (resp. $\mathrm{Gal}(\bar{\Q}/\Q)$) each eigenspace is a $B_{s}$ (resp. $\mathrm{Gal}(\bar{\Q}/\Q)$) module. The action of the operator $t$ on each $V_n$ is essentially the action of $\sigma$, which by definition of eigenspace,  acts by multiplication by $\zeta_\nu$.
Therefore, the matrix representation corresponding to each eigenspace $V_n$ is the matrix of the Burau (resp. pro-$\ell$ Burau) evaluated at $t=\zeta_\nu$.

Similarly in the pro-$\ell$ case we have
\begin{equation}
\label{module-decomp1}
\Z_{\ell}[[\mathbf{Z}_\ell]]^{s-2}
\otimes_{\Z_\ell} \bar{\Q}_\ell = \bigoplus_{\nu=1}^{\ell^k-1} V_\nu,
\end{equation}
which after reducing $\Z_{\ell}[[\mathbf{Z}_\ell]] \rightarrow \Z_{\ell} [\Z_{\ell}/\ell^k \Z_\ell]=\Z_{\ell} [\Z/\ell^k\Z]$ sending $t\mapsto \zeta_\nu$ gives rise to the representation in $V_\nu$.

The decomposition in \ref{module-decomp1} is a decomposition of $\Z_\ell$-module. The Galois module structure and the $\Z_\ell$ action do not commute in this case. Indeed, the equation  (\ref{actGeneratos}) implies that 
$\sigma \in \mathrm{Gal}(\bar{\Q}/\Q)$ acts on the pro-$\ell$ generator by 
\[
\sigma t =t^{N(\sigma)} \sigma.
\]
Therefore, the modules $V_\nu$ defined above are $\mathrm{ker}N$-modules.  



\subsection{Relation to actions on holomorphic differentials}
\label{sec:GalInvarInterforms}

Let $S$ be a compact Riemann-surface of genus $g$. Consider the first homology group $H_1(S,\mathbb{Z})$ which is a free $\mathbb{Z}$-module of rank $2g$. Let $H^0(S,\Omega_S)$ be the space of holomorphic differentials which is a $\mathbb{C}$-vector space of dimension $g$. The function
\begin{eqnarray*}
H_1(S,\mathbb{Z}) \times H^0(S,\Omega_S) &  \rightarrow & \mathbb{R} \\
( \gamma , \omega)  & \mapsto & \langle \gamma, \omega \rangle =\mathrm{Re}\int_\gamma \omega
\end{eqnarray*}
induces a duality $H_1(S,\mathbb{Z})\otimes \mathbb{R}$ to $H^0(S,\Omega_S)^*$, see  \cite[th. 5.6]{LangIntroAlgAbFun}, \cite[sec. 2.2 p. 224]{Griffiths-Harris:95}. Therefore an action of a group element  on $H_1(S,\mathbb{Z})$ gives rise to the contragredient action on holomorphic differentials, see also \cite[p. 271]{Farkas-Kra}.

C. Mc Mullen in \cite[sec. 3]{McMullenBraidHodge} considered the Hodge decomposition of the DeRham cohomology as 
\[
H^1(X)=\mathrm{Hom}_{\C}(H_1(X,\Z),\C)=
H^{1,0}(X) \oplus H^{0,1}(X)\cong \Omega(X) \oplus \bar{\Omega}(X). 
\] 
Of course this decomposition takes place in the dual space of holomorphic differentials, and is based on the intersection form 
\begin{equation} \label{dua-intform}
\langle \alpha,\beta \rangle =i/2 \int_X \alpha \wedge \bar{\beta}, \qquad i^2=-1.
\end{equation}
In this article we use the group theory approach and we focus around the homology group $H_1(X,\Z)$. Homology group is equipped with an intersection form and a canonical symplectic basis $a_1,\ldots,a_g,b_1,\ldots,b_g$ such that 
\[
\langle a_i,b_j \rangle =\delta_{ij}, \qquad \langle a_i,a_j \rangle= \langle b_i,b_j \rangle =0.  
\] 
Every two homology classes $\gamma,\gamma'$ can be written as $\Z$-linear combinations of the canonical basis  
\[
\gamma = \sum_{i=1}^g (\lambda_i a_i + \mu_i b_i) \qquad
\gamma' = \sum_{i=1}^g (\lambda_i' a_i + \mu_i' b_i) 
\]
and the intersection is given by 
\[
\langle \gamma,\gamma' \rangle=
(\lambda_1,\ldots,\lambda_g, \mu_1,\ldots,\mu_g) 
\begin{pmatrix}
0 & \mathbb{I}_g  \\
- \mathbb{I}_g & 0 
\end{pmatrix}
(\lambda_1',\ldots,\lambda_g', \mu_1',\ldots,\mu_g')^t.
\]
This gives rise to a  representation
\begin{equation}
\label{sympRep}
\rho: B_{s-1} \rightarrow \mathrm{Sp}(2g,\mathbb{Z})
\end{equation}
since $\langle \sigma(\gamma),\sigma(\gamma')\rangle=\langle \gamma,\gamma'\rangle$. 
Indeed, it is known\cite[sec. 3.2.1]{MR2435235} that the action of the braid group   keeps the intersection multiplicity of two curves. 
 The relation to the unitary representation on holomorphic differentials (and the signature computations) is given by using the diagonalization of 
\[
\begin{pmatrix}
0 & \mathbb{I}_g  \\
- \mathbb{I}_g & 0 
\end{pmatrix}
=P \cdot  \mathrm{diag}(\underbrace{i,\ldots,i}_g,\underbrace{-i,\ldots,-i}_g) \cdot P^{-1}, 
\]
and the extra``$i$''  put in front of eq. (\ref{dua-intform}).

\subsubsection{Arithmetic intersection}
\label{arithInter}
In order to define an analogous result in the case of absolute Galois group we have first to define an intersection form in $H_1(X,\Z_\ell)$, which can be defined as the limit of the intersection forms in $H_1(X,\Z/\ell^n \Z)$.
 For every $\sigma\in \mathrm{Gal}(\bar{\Q}/\Q)$ and $\gamma,\gamma' \in H_1(X,\Z_\ell)$ we 
 have 
 \[
\langle \sigma(\gamma),\sigma(\gamma')\rangle=
\chi_\ell(\sigma) \langle \gamma,\gamma' \rangle,
 \]
 where $\chi_\ell(\sigma)$ is the $\ell$-cyclotomic character. 

Indeed, consider the Jacobian variety $J(X)$ for the curve $X$. By construction of the Jacobian variety as a quotient of its tangent space at the identity element it is clear that $H_1(J(X),\Z)=H_1(X,\Z)$ and after tensoring with $\Z_\ell$ the same equality  holds for the pro-$\ell$ homology groups. Consider the  following diagram
\[
\xymatrix{
	H_1(X,\Z) \times H_1(X,\Z) 
	\ar[r]^{\langle \cdot,\cdot\rangle} \ar[d] &
	\Z \ar[d]
	 \\
	T_\ell(J(X)) \times T_\ell(J(X)) 
	\ar[r]^{e^{\lambda}} &
	\Z_\ell(1)= 
	\displaystyle \lim_{\leftarrow} \mu_{\ell^n}\subset \bar{\Q},
}
\]
where the down horizontal array is given by the Weil pairing $e^{\lambda}$ with respect to the canonical polarization $\lambda$, and the upper map is the homology intersection form.
The arrows pointing down on the left are the obvious ones, while the down pointing arrow $\Z \rightarrow \displaystyle \lim_{\leftarrow} \mu_{\ell^n}$ 
is given by 
$\Z \ni m \mapsto (\ldots,e^{ \frac{2 \pi i m}{\ell^n} },\ldots  )$. 
The above diagram 
is known to commute with a negative sign,  see \cite[p. 237]{MumfordAbelian}, \cite[ex.13.3 p.58]{milneAV} that is 
\[
e^\lambda(a,a')= 
(\ldots, e^{ -\frac{2 \pi i \langle a,a' \rangle}{\ell^n} } ,\ldots )
\]
By selecting a primitive $\ell^n$-root of unity for every $n$, say $e^{2\pi i/\ell^n}$ we can write $\Z_\ell(1)$ as an additive module, that is we can send
\[
\Z_\ell(1) \ni \alpha =(\ldots, e^{2 \pi i a_n/ \ell^n},\ldots) \mapsto (\ldots, a_n,\ldots) \in \Z_\ell.
\]
It is known that the Weil pairing induces a symplectic pairing in $T_\ell(J(X))\cong H_1(X,\Z_\ell)$, \cite[prop. 16.6]{MR861974},\cite{MR3549183}, \cite{Duarte} so that 
\[
\langle \sigma a ,\sigma a' \rangle=
\chi_\ell(\sigma) \langle a, a'\rangle. 
\]
In this way we obtain a representation 
\[
\rho: \mathrm{Gal}(\bar{\Q}/\Q) \rightarrow 
\mathrm{GSp}(2g,\Z_\ell)
\]
which is the arithmetic analogue of the representation given in eq. (\ref{sympRep}).

 \def\cprime{$'$}


\begin{thebibliography}{10}

\bibitem{MR3549183}
Sara Arias-de Reyna, C\'{e}cile Armana, Valentijn Karemaker, Marusia Rebolledo,
  Lara Thomas, and N\'{u}ria Vila.
\newblock Large {G}alois images for {J}acobian varieties of genus 3 curves.
\newblock {\em Acta Arith.}, 174(4):339--366, 2016.

\bibitem{Belyi1}
G.~V. Bely{\u\i}.
\newblock Galois extensions of a maximal cyclotomic field.
\newblock {\em Izv. Akad. Nauk SSSR Ser. Mat.}, 43(2):267--276, 479, 1979.

\bibitem{Birman1972-pg}
Joan~S Birman and Hugh~M Hilden.
\newblock Isotopies of homeomorphisms of {R}iemann surfaces and a theorem about
  {A}rtin's braid group.
\newblock {\em Bull. Am. Math. Soc.}, 78(6):1002--1004, November 1972.

\bibitem{bogoGrp}
Oleg Bogopolski.
\newblock {\em Introduction to group theory}.
\newblock EMS Textbooks in Mathematics. European Mathematical Society (EMS),
  Z\"urich, 2008.
\newblock Translated, revised and expanded from the 2002 Russian original.

\bibitem{Chevalley1934-eb}
C~Chevalley, A~Weil, and E~Hecke.
\newblock {\"{U}}ber das verhalten der integrale 1. gattung bei automorphismen
  des funktionenk{\"{o}}rpers.
\newblock {\em Abh. Math. Semin. Univ. Hambg.}, 1934.

\bibitem{DDMS}
J.~D. Dixon, M.~P.~F. du~Sautoy, A.~Mann, and D.~Segal.
\newblock {\em Analytic pro-{$p$} groups}, volume~61 of {\em Cambridge Studies
  in Advanced Mathematics}.
\newblock Cambridge University Press, Cambridge, second edition, 1999.

\bibitem{Farkas-Kra}
Hershel~M. Farkas and Irwin Kra.
\newblock {\em Riemann surfaces}, volume~71 of {\em Graduate Texts in
  Mathematics}.
\newblock Springer-Verlag, New York, 1980.

\bibitem{MR1343250}
William Fulton.
\newblock {\em Algebraic topology}, volume 153 of {\em Graduate Texts in
  Mathematics}.
\newblock Springer-Verlag, New York, 1995.
\newblock A first course.

\bibitem{Griffiths-Harris:95}
Phillip Griffiths and Joseph Harris.
\newblock {\em Principles of algebraic geometry}.
\newblock John Wiley \& Sons Inc., New York, 1994.
\newblock Reprint of the 1978 original.

\bibitem{MR3069500}
E.~Hecke.
\newblock \"{U}ber ein fundamentalproblem aus der theorie der elliptischen
  modulfunktionen.
\newblock {\em Abh. Math. Sem. Univ. Hamburg}, 6(1):235--257, 1928.

\bibitem{Ihara1985-it}
Yasutaka Ihara.
\newblock Profinite braid groups, {G}alois representations and complex
  multiplications.
\newblock {\em Ann. of Math. (2)}, 123(1):43--106, 1986.

\bibitem{IharaCruz}
Yasutaka Ihara.
\newblock Arithmetic analogues of braid groups and {G}alois representations.
\newblock In {\em Braids ({S}anta {C}ruz, {CA}, 1986)}, volume~78 of {\em
  Contemp. Math.}, pages 245--257. Amer. Math. Soc., Providence, RI, 1988.

\bibitem{Kallel-Sjerve}
Sadok Kallel and Denis Sjerve.
\newblock On the group of automorphisms of cyclic covers of the {R}iemann
  sphere.
\newblock {\em Math. Proc. Cambridge Philos. Soc.}, 138(2):267--287, 2005.

\bibitem{MR2435235}
Christian Kassel and Vladimir Turaev.
\newblock {\em Braid groups}, volume 247 of {\em Graduate Texts in
  Mathematics}.
\newblock Springer, New York, 2008.
\newblock With the graphical assistance of Olivier Dodane.

\bibitem{MorishitaATIT}
Hisatoshi Kodani, Masanori Morishita, and Yuji Terashima.
\newblock Arithmetic {T}opology in {I}hara {T}heory.
\newblock {\em Publ. Res. Inst. Math. Sci.}, 53(4):629--688, 2017.

\bibitem{Ko:99}
Aristides Kontogeorgis.
\newblock The group of automorphisms of cyclic extensions of rational function
  fields.
\newblock {\em J. Algebra}, 216(2):665--706, 1999.

\bibitem{LangIntroAlgAbFun}
Serge Lang.
\newblock {\em Introduction to algebraic and abelian functions}, volume~89 of
  {\em Graduate Texts in Mathematics}.
\newblock Springer-Verlag, New York-Berlin, second edition, 1982.

\bibitem{Duarte}
Pedro Lemos.
\newblock {\em Residual Representations of Abelian Varieties}.
\newblock PhD thesis, University of Warwick, 2017.
\newblock Available at http://wrap.warwick.ac.uk/94788/.

\bibitem{MagKarSol}
Wilhelm Magnus, Abraham Karrass, and Donald Solitar.
\newblock {\em Combinatorial group theory}.
\newblock Dover Publications, Inc., Mineola, NY, second edition, 2004.
\newblock Presentations of groups in terms of generators and relations.

\bibitem{McMullenBraidHodge}
Curtis~T. McMullen.
\newblock Braid groups and {H}odge theory.
\newblock {\em Math. Ann.}, 355(3):893--946, 2013.

\bibitem{MR861974}
J.~S. Milne.
\newblock Abelian varieties.
\newblock In {\em Arithmetic geometry ({S}torrs, {C}onn., 1984)}, pages
  103--150. Springer, New York, 1986.

\bibitem{milneAV}
James~S. Milne.
\newblock Abelian varieties (v2.00), 2008.
\newblock Available at www.jmilne.org/math/.

\bibitem{Morishita2011-yw}
M~Morishita.
\newblock {\em Knots and Primes: An Introduction to Arithmetic Topology}.
\newblock SpringerLink : B{\"{u}}cher. Springer-Verlag London Limited, 2011.

\bibitem{MumfordAbelian}
David Mumford.
\newblock {\em Abelian varieties}, volume~5 of {\em Tata Institute of
  Fundamental Research Studies in Mathematics}.
\newblock Published for the Tata Institute of Fundamental Research, Bombay; by
  Hindustan Book Agency, New Delhi, 2008.
\newblock With appendices by C. P. Ramanujam and Yuri Manin, Corrected reprint
  of the second (1974) edition.

\bibitem{NeukirchBonn}
J\"urgen Neukirch.
\newblock {\em Class field theory}.
\newblock Springer, Heidelberg, 2013.
\newblock The Bonn lectures, edited and with a foreword by Alexander Schmidt,
  Translated from the 1967 German original by F. Lemmermeyer and W. Snyder,
  Language editor: A. Rosenschon.

\bibitem{Prasolov97}
V.~V. Prasolov and A.~B. Sossinsky.
\newblock {\em Knots, links, braids and 3-manifolds}, volume 154 of {\em
  Translations of Mathematical Monographs}.
\newblock American Mathematical Society, Providence, RI, 1997.
\newblock An introduction to the new invariants in low-dimensional topology,
  Translated from the Russian manuscript by Sossinsky [Sosinski{\u\i}].

\bibitem{RibesZalesskii}
Luis Ribes and Pavel Zalesskii.
\newblock {\em Profinite groups}, volume~40 of {\em Ergebnisse der Mathematik
  und ihrer Grenzgebiete. 3. Folge. A Series of Modern Surveys in Mathematics
  [Results in Mathematics and Related Areas. 3rd Series. A Series of Modern
  Surveys in Mathematics]}.
\newblock Springer-Verlag, Berlin, second edition, 2010.

\end{thebibliography}
\end{document}